\setlist[enumerate,1]{label={\upshape(\roman*)}}
\DeclareMathOperator{\Spec}{Spec}
\newcommand{\h}{\mathfrak{H}}
\newcommand{\Ho}{\mathfrak{h}}
\newcommand{\C}{\mathbb{C}}
\newcommand{\Z}{\mathbb{Z}}
\newcommand{\N}{\mathbb{N}}
\newcommand{\R}{\mathbb{R}}
\newcommand{\allone}{\mathbf{1}}
\newcommand{\ep}{\varepsilon}
\newcommand{\cD}{\mathcal{D}}
\newcommand{\cS}{\mathcal{S}}
\newcommand{\LG}{\operatorname{\mathfrak{L}}}
\newcommand{\sHg}{Hoffman signed graph}
\newcommand{\nexteqv}{\displaybreak[0]\\ &\iff}
\newcommand{\nexteq}{\displaybreak[0]\\ &=}
\newcommand{\refby}[1]{&&\text{(by (\ref{#1}))}}
\newtheorem{theorem}{Theorem}[section]
\newtheorem{lemma}[theorem]{Lemma}
\newtheorem{proposition}[theorem]{Proposition}
\newtheorem{corollary}[theorem]{Corollary}
\theoremstyle{definition}
\newtheorem{definition}[theorem]{Definition}
\title[Signed analogue of line graphs]{Signed analogue of line graphs and their smallest eigenvalues}
\author[A. L. Gavrilyuk]{Alexander L. Gavrilyuk}
\address{Pusan National University}
\email{gavrilyuk@riko.shimane-u.ac.jp}
\author[A. Munemasa]{Akihiro Munemasa}
\address{Graduate School of Information Sciences\\
Tohoku University}
\email{munemasa@math.is.tohoku.ac.jp}
\author[Y. Sano]{Yoshio Sano}
\address{Faculty of Engineering, Information and Systems\\
University of Tsukuba}
\email{sano@cs.tsukuba.ac.jp}
\author[T. Taniguchi]{Tetsuji Taniguchi}
\address{Hiroshima Institute of Technology}
\email{t.taniguchi.t3@cc.it-hiroshima.ac.jp}
\begin{document}
\keywords{}
\subjclass[2010]{05C50,05C22,15A18,15B57}
\date{\today}

\begin{abstract}
In this paper, we show that every connected signed graph with smallest
eigenvalue strictly greater than $-2$ and large enough minimum degree is
switching equivalent to a complete graph. This is a signed analogue of 
a theorem of Hoffman. The proof is based on what we call Hoffman's limit 
theorem which we formulate for Hermitian matrices, and also the
extension of the concept of Hoffman graph and line graph for the setting
of signed graphs.
\end{abstract}

\maketitle

\section{Introduction}

Let $G$ be a simple graph with the vertices-versus-edges incidence $(0,1)$-matrix $N$.
It is well known that the line graph $L(G)$ of $G$, whose vertices
are the edges of $G$ with two edges being adjacent whenever they are incident,
has adjacency matrix
\[
A(L(G))=N^{\top}N-2I_{|E(G)|},
\]
and hence its smallest eigenvalue $\lambda_{\min}$ is at least $-2$. Although this property is not exclusive,
a theorem of Cameron, Goethals, Shult, and Seidel \cite{CGSS}, which is one of the most beautiful results in algebraic graph theory,
classifies 
connected
graphs having $\lambda_{\min}\geq -2$.
Namely, such a graph $L$ on at least $37$ vertices must be a generalized line graph in the sense that
its adjacency matrix satisfies
\[
A(L)=N^{\top}N-2I,
\]
for some $(0,\pm 1)$-matrix $N$.
(A combinatorial definition of generalized line graphs can be found in \cite{MR0469826}.)
The proof relies on the classification of root systems in Euclidean space \cite{CGSS}.

Another approach, which sheds light on the structure of graphs with $\lambda_{\min}\geq -2$,
was developed by Hoffman \cite{MR0469826}. It is based on Ramsey's theorem
and a special class of vertex-colored graphs, 
which are called Hoffman graphs
by Woo and Neumaier \cite{hlg}.
Hoffman's results not only show that graphs with $\lambda_{\min}\geq -2$ and on sufficiently
large number of vertices are generalized line graphs, but also establish the existence of some limit
points of the smallest eigenvalues of a sequence of graphs $G$ with increasing minimum degree $\delta(G)$.

\begin{theorem}[{\cite{MR0469826}}]\label{thm:H}
There exists an integer valued function $h$ defined on 
the half-open interval $(-1-\sqrt{2},-1]$,
such that 
\begin{itemize}
  \item[$(1)$] for each $\lambda\in(-2,-1]$, 
  if $G$ is a connected graph with 
  $\lambda_{\min}(G)\geq\lambda$ and $\delta(G)\geq h(\lambda)$,
  then $G$ is a clique and $\lambda_{\min}(G)=-1$;
  \item[$(2)$] for each $\lambda\in(-1-\sqrt{2},-2]$,
  if $G$ is a connected graph with 
  $\lambda_{\min}(G)\geq\lambda$ and $\delta(G)\geq h(\lambda)$,
  then $G$ is a generalized line graph and $\lambda_{\min}(G)=-2$.
\end{itemize}
\end{theorem}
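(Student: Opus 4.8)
The plan is to prove both parts simultaneously by a compactness argument that converts a statement about an infinite family of graphs of growing minimum degree into a finite eigenvalue computation on a single small object. Two tools are needed. The first is a form of \emph{Hoffman's limit theorem}. To a Hoffman graph $\mathfrak{h}$, with slim vertices $S$ and fat vertices $F$, one attaches the real symmetric \emph{special matrix} $B(\mathfrak{h})=A_{s}-NN^{\top}$ indexed by $S$, where $A_{s}$ is the adjacency matrix of the slim part and $N$ is the slim-versus-fat incidence matrix; one then forms the ordinary graph $G_{n}(\mathfrak{h})$ by replacing each fat vertex with a clique of order $n$ completely joined to the slim neighbors of that fat vertex. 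The assertion to be proved is that $\lambda_{\min}(G_{n}(\mathfrak{h}))$ is nonincreasing in $n$ and converges to $\lambda_{\min}(B(\mathfrak{h}))$ as $n\to\infty$; I would establish this by a Hermitian Schur-complement computation, isolating the contribution of each large clique block (whose own spectrum accumulates at $-1$) from the fixed slim part. The second tool is the classification of those $\mathfrak{h}$ with $\lambda_{\min}(B(\mathfrak{h}))>-1-\sqrt{2}$: this bound excludes, for instance, three mutually non-adjacent slim vertices on a common fat vertex (for which $\lambda_{\min}(B)=-3$), and more generally forces $\mathfrak{h}$ into a short explicit list in which either $\lambda_{\min}(B(\mathfrak{h}))=-1$ and every $G_{n}(\mathfrak{h})$ is a clique, or $\lambda_{\min}(B(\mathfrak{h}))=-2$ and every $G_{n}(\mathfrak{h})$ is a generalized line graph.

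Second, I would set up the contradiction together with the Ramsey extraction. Fix $\lambda$ and suppose no integer $h(\lambda)$ works; then there is a sequence of connected graphs $(G_{m})$ with $\lambda_{\min}(G_{m})\ge\lambda$ and $\delta(G_{m})\to\infty$, each failing the desired conclusion. The eigenvalue hypothesis restricts the local structure: if a vertex $x$ had $t$ pairwise non-adjacent neighbors, then $G_{m}$ would contain an induced star $K_{1,t}$, so by interlacing $\lambda\le\lambda_{\min}(K_{1,t})=-\sqrt{t}$ and hence $t\le\lambda^{2}$. Thus every neighborhood has independence number bounded by $\lambda^{2}$, while its size is at least $\delta(G_{m})\to\infty$; Ramsey's theorem then yields arbitrarily large cliques through each vertex. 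A second application of the eigenvalue bound limits the number of such large cliques meeting at a vertex, so contracting each large clique to a fat vertex produces a Hoffman graph $\mathfrak{h}_{m}$ whose combinatorial type has size bounded independently of $m$. Passing to a subsequence makes $\mathfrak{h}_{m}=\mathfrak{h}$ constant and exhibits the local structure of $G_{m}$ as $G_{n(m)}(\mathfrak{h})$ with $n(m)\to\infty$.

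Third, I would combine the tools. Since $G_{n(m)}(\mathfrak{h})$ occurs as an induced subgraph of $G_{m}$, interlacing gives $\lambda\le\lambda_{\min}(G_{m})\le\lambda_{\min}(G_{n(m)}(\mathfrak{h}))$; letting $m\to\infty$ and invoking the limit theorem yields $\lambda_{\min}(B(\mathfrak{h}))\ge\lambda>-1-\sqrt{2}$, so $\mathfrak{h}$ lies in the classified list. When $\lambda\in(-2,-1]$ this forces $\lambda_{\min}(B(\mathfrak{h}))=-1$, whence the local model is a clique; running the same argument at every vertex and using connectivity shows all of $G_{m}$ is a clique with $\lambda_{\min}(G_{m})=-1$. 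When $\lambda\in(-1-\sqrt{2},-2]$ it forces $\lambda_{\min}(B(\mathfrak{h}))=-2$, and the same globalization makes $G_{m}$ a generalized line graph with $\lambda_{\min}(G_{m})=-2$. In either case $G_{m}$ satisfies the conclusion, contradicting its choice; this establishes the existence of $h(\lambda)$.

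The hard part is twofold. The analytic difficulty is the limit theorem itself: one must prove that the smallest eigenvalue genuinely converges to $\lambda_{\min}(B(\mathfrak{h}))$, controlling the interaction between the growing clique blocks and the fixed slim part, and this Schur-complement estimate is where the real work lies. The combinatorial difficulty is the passage from the local model around a single vertex to a global conclusion: one must verify that the clique decompositions obtained at different vertices are mutually consistent, so that the whole connected graph, and not merely one neighborhood, is a clique or a generalized line graph, and that the extremal value $-1$ or $-2$ is attained exactly rather than only in the limit.
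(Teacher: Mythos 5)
You should first note that the paper contains no proof of this theorem: it is quoted from Hoffman \cite{MR0469826} (in the sharpened form of Woo and Neumaier \cite{hlg}), and the paper itself only proves one ingredient, namely Hoffman's limit theorem (Theorem~\ref{lem:2.2} and its corollary), by the same Schur-complement analysis you propose. Your plan --- interlacing on induced stars $K_{1,t}$ to bound the independence number of neighborhoods by $\lambda^2$, Ramsey's theorem to extract large cliques, contraction of these cliques to fat vertices of a Hoffman graph of bounded type, the limit theorem applied along a subsequence, and the classification of Hoffman graphs with smallest eigenvalue exceeding $-1-\sqrt{2}$ --- is precisely the strategy the paper attributes to that cited source, so this is essentially the same approach as the (cited) proof.
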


This means that the intervals $(-2,-1)$ and $(-1-\sqrt{2},-2)$ are ignorable
if our concern is the smallest eigenvalues of graphs with sufficiently large minimum degree.
Note that Hoffman~\cite{MR0469826} stated this theorem in a slightly
weaker manner, but his proof shows that the above statement is valid
(see \cite[Theorem~5.5 and Remark~5.6]{hlg}).

As a tool for the proof of Theorem \ref{thm:H}, Hoffman showed what we would call
Hoffman's limit theorem (sometimes attributed to Ostrowski and Hoffman due to
their unpublished work; see \cite{H-SIAM}). 
A very terse proof of this theorem was given
by Hoffman in \cite[Lemma~2.2]{H-GEOM}
(see also \cite[Theorem~2.14]{JKMT}).

Signed graphs can be thought of as simple graphs whose edges get labels from the set $\{+1,-1\}$.
A natural extension of $(0,1)$-matrices associated with graphs to $(0,\pm 1)$-matrices allows
to study the spectra of signed graphs. The spectral theory of signed graphs has received
much attention recently \cite{BCKW}, in particular, Problem~3.17 from \cite{BCKW} 
suggests to extend
the so-called Hoffman theory, including the above-mentioned Theorem~\ref{thm:H}, to signed graphs.

In this paper, we first provide a detailed proof of a slightly generalized
version of Hoffman's limit theorem (see Theorem \ref{lem:2.2}), which
is then used to show our main result: an analogue of Part $(1)$ of Theorem \ref{thm:H}
for signed graphs (see Section~\ref{sec:pre} for precise definitions).

\begin{theorem}\label{thm:1.1S}
There exists an integer valued function $f$ defined on 
the half-open interval $(-2,-1]$
such that, for each $\lambda\in(-2,-1]$,
if a connected signed graph $S$ satisfies $\lambda_{\min}(S)\geq\lambda$,
and $\delta(S)\geq f(\lambda)$,
then $S$ is switching equivalent to a complete graph (and hence $\lambda_{\min}(S) = - 1$).
\end{theorem}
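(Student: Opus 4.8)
The plan is to combine a Gram-vector representation coming from the eigenvalue bound with a Ramsey-type argument and the signed analogue of Hoffman's line-graph machinery, with the limit theorem (Theorem~\ref{lem:2.2}) supplying the link between large minimum degree and the reduced structure.

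First I would set $\mu=-\lambda\in[1,2)$ and use $\lambda_{\min}(S)\ge\lambda$ to write $A(S)-\lambda I\succeq 0$, so that this matrix is the Gram matrix of vectors $\{x_v\}$ with $\|x_v\|^2=\mu$ and $\langle x_u,x_v\rangle=s_{uv}\in\{0,\pm1\}$ equal to the signed adjacency. Switching corresponds exactly to flipping signs $x_v\mapsto\pm x_v$, so the goal becomes: the underlying graph of $S$ is complete. Two clean local facts fall out of positive semidefiniteness. (a) If $w_1,\dots,w_k$ are pairwise non-adjacent neighbours of a vertex $v$, then after flipping signs the projections of the $x_{w_i}$ onto $x_v^\perp$ have Gram matrix $\mu I-\tfrac1\mu J$, whose positive semidefiniteness forces $k\le\mu^2=\lambda^2<4$; hence every neighbourhood has independence number at most $3$. (b) For any triangle the $3\times3$ principal Gram minor equals $\mu^3-3\mu+2s$, where $s\in\{\pm1\}$ is the product of its three edge signs; since $\mu^3-3\mu-2=(\mu-2)(\mu+1)^2<0$ for $\mu<2$, every triangle must be balanced ($s=+1$). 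Fact (b) already shows that as soon as the underlying graph is complete, the signing is balanced and $S$ is switching equivalent to the all-positive complete graph; so the entire difficulty is concentrated in proving completeness.

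Next I would make $f(\lambda)$ large enough that, by Ramsey's theorem applied inside a neighbourhood (independence number $\le 3$, i.e.\ no independent $4$-set), the hypothesis $\delta(S)\ge f(\lambda)$ produces arbitrarily large cliques. On a clique $C$ the switched, balanced vectors take the form $x_c=w+\sqrt{\mu-1}\,f_c$ with $w$ a fixed unit vector and $\{f_c\}$ orthonormal and orthogonal to $w$. Writing any other vertex as $x_p=a\,w+\sum_c b_c f_c+\cdots$ gives $a+\sqrt{\mu-1}\,b_c\in\{0,\pm1\}$ while $\sum_c b_c^2\le\|x_p\|^2=\mu$; hence for all but boundedly many $c$ the coordinate $b_c$ is negligible and $\langle x_p,x_c\rangle$ equals a single value $\nu\in\{0,\pm1\}$. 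Thus every vertex interacts uniformly with a large clique, and large cliques may be collapsed to ``fat'' vertices, exhibiting $S$ as the slim part of a signed line-graph construction over a signed Hoffman graph $\mathfrak{h}$.

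The final and hardest step is to rule out $\nu=0$ globally, i.e.\ to show that no two vertices are non-adjacent. Here I would invoke the signed version of Hoffman graphs together with Theorem~\ref{lem:2.2}: as the fat cliques grow, the limit theorem identifies $\lim\lambda_{\min}(S)$ with the smallest eigenvalue of the reduced (special) matrix of $\mathfrak{h}$, so $\lambda_{\min}(S)\ge\lambda>-2$ forces this reduced eigenvalue to exceed $-2$ as well. A classification of the connected signed Hoffman graphs whose reduced smallest eigenvalue exceeds $-2$ should then leave only the single fat vertex with positive edges, which is exactly a complete graph; any slim--slim edge or second fat vertex would drop the reduced eigenvalue to $-2$ or below. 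Combined with Fact (b), this yields that $S$ is switching equivalent to a complete graph, and the eigenvalue is then $-1$. I expect the classification of the low-eigenvalue signed Hoffman graphs, together with the bookkeeping needed to apply the limit theorem uniformly as the minimum degree grows, to be the main obstacle; the local facts (a) and (b) are the reliable concrete input that makes the reduction possible.
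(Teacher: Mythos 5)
Your outline reconstructs Hoffman's original Ramsey-based strategy rather than the paper's, and its first half is sound: the Gram facts (a) and (b) are correct (the $3\times3$ minor $\mu^3-3\mu+2s$ computation is right, so unbalanced triangles are excluded), Ramsey inside a neighbourhood of bounded independence number does produce large cliques, and the representation $x_c=w+\sqrt{\mu-1}\,f_c$ with the resulting ``uniform attachment'' of outside vertices to a large clique is the standard and correct consequence. But the crux of your argument --- ruling out non-adjacent pairs --- rests on an asserted ``classification of the connected signed Hoffman graphs whose reduced smallest eigenvalue exceeds $-2$'' that you neither prove nor state correctly. As stated (``only the single fat vertex with positive edges'') it is false: a fat vertex with a positively attached slim \emph{clique} has reduced matrix $-I$, hence smallest eigenvalue $-1>-2$, and it must survive any such classification since it is exactly what a complete graph collapses to. What your plan actually requires is the converse direction: that every other local configuration (the signed analogues of $\Ho_2$, $\Ho_3$, $\Ho_4$), once the fat vertices are blown back up into $K_t$'s inside the fixed graph $S$, forces $\lambda_{\min}(S)<\lambda$ for large $t$. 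Note also that your use of the limit theorem has the direction reversed: $S$ is a single fixed graph, so nothing in it ``grows''; the correct mechanism (as in the paper) is to use Theorem~\ref{lem:2.2} once to choose $n_0$ with $\lambda_{\min}(G(\Ho_i,n_0))<\lambda$ and then forbid these $G(\Ho_i,n_0)$ as induced subgraphs by interlacing, cf.~\eqref{1a123} and Lemma~\ref{lem:5a}. Finally, the one-sentence step ``large cliques may be collapsed to fat vertices, exhibiting $S$ as the slim part of a signed line-graph construction'' hides the long combinatorial heart of Hoffman's proof (consistency of the clique system, boundedly many big cliques per vertex, etc.), which you would have to carry out in the signed setting.

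For comparison, the paper deliberately avoids this entire Ramsey/Hoffman-graph machinery for the structural step. It uses the finiteness of exceptional signed graphs to bound their minimum degree by some $d_0$, then invokes the classification of integrally represented signed graphs with $\lambda_{\min}>-2$ from \cite{JCTB} (Theorem~\ref{thm:3}, Corollary~\ref{cor:1.4}) to conclude that $S$, or $S$ minus one vertex, is switching equivalent to the line graph of a tree; a short forbidden-subgraph argument (Lemma~\ref{lem:i}, using only $G(\Ho_2,n_0)$) then shows such a line graph with $\delta\geq 2n_0$ must come from a claw, i.e.\ $S$ is switching equivalent to a complete graph, with $f(\lambda)=\max\{2n_0,d_0\}+1$. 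This buys close-to-tight estimates for $f(\lambda)$, whereas your route, even if completed, inherits the astronomical Ramsey bounds of Hoffman's original proof; conversely, your route would be self-contained modulo the missing classification, while the paper's leans on the root-system-based results of \cite{JCTB}. As it stands, though, the missing classification and the unproved collapsing step are genuine gaps, not mere bookkeeping.
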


Note that the proof of Theorem~\ref{thm:H} in \cite{MR0469826} uses Ramsey's theorem,
which produces astronomical estimates for $h(\lambda)$.
In proving Theorem~\ref{thm:1.1S} in Section \ref{s:Proof}, we introduce the notion of
signed Hoffman graphs, and then involve a structural classification of signed graphs
with smallest eigenvalue greater than $-2$ from \cite{JCTB}, which in turn
relies on the root systems. This enables one to obtain close to tight estimates for $f(\lambda)$.

In Section \ref{s:genline}, we consider a signed analogue of generalized line graphs.
It would be interesting to see whether an analogue of Part $(2)$ of Theorem \ref{thm:H} can be shown.
Note that Woo and Neumaier \cite{hlg} went on further to extend Hoffman's ideas to graphs
with smallest eigenvalue at least $\alpha$, where $\alpha\approx -2.4812$ is a zero
of the cubic polynomial $x^3+2x^2-2x-2$. Finally, Koolen, Yang and Yang \cite{KYY}
recently proved a version of Theorem~\ref{thm:H} for simple graphs
with smallest eigenvalue at least $-3$.

\section{Preliminaries}\label{sec:pre}

A \textbf{signed graph}
$S$ is a triple
$(V, E^{+}, E^{-})$
of a set $V$ of vertices,
a set $E^{+}$ of $2$-subsets
of $V$ (called $(+)$-\textbf{edges}, or \textbf{positive} edges), and
a set $E^{-}$ of $2$-subsets
of $V$ (called $(-)$-\textbf{edges}, or \textbf{negative} edges)
such that
$E^{+} \cap E^{-} = \emptyset$.
A signed graph in which $E^{-} = \emptyset$ is called an
\textbf{unsigned graph} or simply, a \text{graph}.

Let $S$ be a signed graph.
We denote
the set of vertices of $S$
by $V(S)$,
the set of $(+)$-edges of $S$
by $E^{+}(S)$,
and
the set of $(-)$-edges of $S$
by $E^{-}(S)$.
By a \textbf{subgraph}
$S' = (V(S'), E^{+}(S'), E^{-}(S'))$ of $S$ we mean a vertex
induced signed subgraph, i.e.,
$V(S') \subseteq V(S)$ and
$E^{\pm}(S')=\{\{x,y\} \in E^{\pm}(S) \mid x,y \in V(S')\}$.
If $S'$ is a subgraph of $S$, then we say that $S$
\textbf{contains} $S'$.
The \textbf{underlying graph} $U(S)$ of $S$
is the unsigned graph
$(V(S),E^+(S) \cup E^-(S))$.
The \textbf{minimum degree} $\delta(S)$ of $S$ is defined to be
the minimum degree of $U(S)$.
The signed graph $S$ is \textbf{connected} if
$U(S)$ is connected.

Two signed graphs $S$ and $S'$
are said to be \textbf{isomorphic}
if there exists a bijection $\phi: V(S) \to V(S')$
such that $\{u,v\} \in E^+(S)$ if and only if
$\{\phi(u), \phi(v) \} \in E^+(S')$ and
that $\{u,v\} \in E^-(S)$ if and only if
$\{\phi(u), \phi(v) \} \in E^-(S')$.
For a signed graph $S$, we define its
\textbf{adjacency matrix}
$A(S)$ by
\[
(A(S))_{uv}=
\begin{cases}
1  & \text{if } \{u,v\} \in E^+(S), \\
-1 & \text{if } \{u,v\} \in E^-(S), \\
0  & \text{otherwise.}
\end{cases}
\]
The eigenvalues of $S$ are defined to be those of $A(S)$.

A \textbf{switching} at a vertex $v$ is the process of swapping the signs of each edge incident to $v$.
Two signed graphs $S$ and $S'$
are said to be \textbf{switching equivalent} if there exists a subset $W \subseteq V(S)$ such that $S^\prime$ is isomorphic to the graph obtained by switching at each vertex in $W$.
Note that switching equivalent signed graphs have the same multiset of
eigenvalues.

Let $S$ be a signed graph with smallest eigenvalue at least $-2$.
A \textbf{representation} of $S$ is a mapping $\phi$ from $V(S)$ to $\R^n$
for some positive integer $n$
such that $(\phi(u),\phi(v)) = \pm 1$ if
$\{u,v\} \in E^\pm (S)$ respectively, and $(\phi(u),\phi(v))=2\delta_{u,v}$
otherwise,
where
$\delta_{u,v}$ is Kronecker's delta, i.e.,
$\delta_{u,v}=1$ if $u=v$ and $\delta_{u,v}=0$ if $u \neq v$.
Since $A(S) + 2I$ is positive semidefinite,
it is the Gram matrix of a set of vectors $\mathbf{x}_1, \dots, \mathbf{x}_m$.
These vectors satisfy $(\mathbf{x}_i, \mathbf{x}_i) = 2$ and
$(\mathbf{x}_i, \mathbf{x}_j) = 0,\pm 1$ for $i \ne j$.
Sets of vectors satisfying these conditions
determine
line systems.
We denote by $[\mathbf{x}]$ the line determined by a nonzero vector
$\mathbf{x}$, in other words, $[\mathbf{x}]$ is the one-dimensional
subspace spanned by $\mathbf{x}$.
We say that $S$ is \textbf{represented by} the line system $\cS$ if
$S$ has a representation $\phi$ such that $\cS = \{ [\phi(v)] : v \in V(S) \}$.

Below we give descriptions of three line systems, $A_n$, $D_n$ and $E_8$.
Let $\mathbf{e}_1, \dots, \mathbf{e}_n$ be an orthonormal basis for $\R^n$.
\begin{align*}
	A_n &=
	\left \{ [ \mathbf{e}_i - \mathbf{e}_j ]
	: 1 \leq i < j \leq n+1 \right \}\quad(n\geq1), \\
	D_n &= A_{n-1} \cup \left \{ [ \mathbf{e}_i + \mathbf{e}_j ]
	: 1 \leq i < j \leq n \right \}\quad(n\geq4),\\
	E_8 &= D_8 \cup
	\left \{ [ \frac{1}{2} \sum_{i=1}^8 \epsilon_i\mathbf{e}_i ]
	: \epsilon_i \in\{\pm 1\},\; \prod_{i=1}^8 \epsilon_i = 1 \right \}.
\end{align*}
These line systems are used in the following classical result of Cameron, Goethals, Shult, and Seidel.

\begin{theorem}[\cite{CGSS}]
	\label{thm:CGSS}
	Let $G$ be a connected graph with $\lambda_{\min}(G) \geq -2$.
	Then $G$ is represented by a subset of either $D_n$ or $E_8$.
\end{theorem}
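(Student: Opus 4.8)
The plan is to convert the spectral condition into a statement about a system of lines and then appeal to the classification of indecomposable star-closed line systems, which is the geometric core of \cite{CGSS}. First I would use that $\lambda_{\min}(G)\geq -2$ is equivalent to $A(G)+2I$ being positive semidefinite, so that this matrix is the Gram matrix of vectors $\{\mathbf{x}_v : v\in V(G)\}$ with $(\mathbf{x}_v,\mathbf{x}_v)=2$ and $(\mathbf{x}_u,\mathbf{x}_v)=A(G)_{uv}\in\{0,1\}$ for $u\neq v$. The associated lines $[\mathbf{x}_v]$ then form a line system $\cS_0$ in which two distinct lines are orthogonal when the corresponding vertices are non-adjacent and at $60^\circ$ when they are adjacent; this is exactly a representation of $G$ in the sense of Section~\ref{sec:pre}. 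Note that distinct vertices yield distinct lines, since equal-norm representatives on a common line would force an inner product of $\pm 2$, which cannot occur here.

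Next I would record that connectedness of $G$ forces $\cS_0$ to be \emph{indecomposable}: if the lines split into two mutually orthogonal nonempty families, then $V(G)$ splits into two parts joined by no edge, contradicting connectedness. I would then replace $\cS_0$ by its \emph{star-closure} $\cS$, obtained by repeatedly adjoining, for every pair of representatives $\mathbf{a},\mathbf{b}$ with $(\mathbf{a},\mathbf{b})=1$, the line $[\mathbf{a}-\mathbf{b}]$ (note $(\mathbf{a}-\mathbf{b},\mathbf{a}-\mathbf{b})=2$). Since each adjoined line is manufactured from a non-orthogonal pair lying in a single component, star-closure preserves indecomposability, so $\cS$ is an indecomposable star-closed line system containing $\cS_0$.

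The decisive step, and the one I expect to be the main obstacle, is the classification theorem: every indecomposable star-closed line system with lines at mutual angles $60^\circ$ or $90^\circ$ is isomorphic to exactly one of $A_n$, $D_n$, $E_6$, $E_7$, or $E_8$. This is equivalent to the classification of irreducible simply-laced root systems, and all of the genuine geometric work resides here; a workable route is to pass to the integral root lattice spanned by the $\mathbf{x}_v$, identify the norm-$2$ vectors as a reduced root system with a single root length, and then run the Dynkin-diagram classification, the labor being to exclude any exceptional system beyond $E_8$ (for instance by a dimension or interlacing argument, or by exhausting the admissible ``extended'' configurations).

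Finally I would invoke the inclusions $A_n\subseteq D_{n+1}$ and $E_6\subseteq E_7\subseteq E_8$ to conclude that in every case $\cS$ is contained in $D_n$ for some $n$ or in $E_8$. Because $\cS_0\subseteq\cS$ and $\cS_0$ represents $G$, this shows $G$ is represented by a subset of either $D_n$ or $E_8$, as claimed.
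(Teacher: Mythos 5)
This theorem is imported by the paper from \cite{CGSS} without proof, so there is no internal argument to compare against; the question is only whether your reconstruction is sound, and it essentially is. Your outline is precisely the classical Cameron--Goethals--Shult--Seidel route: the Gram factorization of $A(G)+2I$ gives norm-$2$ vectors at angles $60^\circ$ or $90^\circ$, connectedness of $G$ translates into indecomposability of the line system (correctly, because indecomposability is exactly connectedness of the non-orthogonality graph, and each vertex determines its own line since a shared line would force inner product $\pm2$), star-closure preserves indecomposability because each adjoined line $[\mathbf{a}-\mathbf{b}]$ satisfies $(\mathbf{a}-\mathbf{b},\mathbf{a})=1$ and so stays linked to the component it came from, and the classification of indecomposable star-closed systems as $A_n$, $D_n$, $E_6$, $E_7$, $E_8$ plus the inclusions $A_n\subseteq D_{n+1}$, $E_6\subseteq E_7\subseteq E_8$ finishes the proof. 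You are also right, and appropriately candid, that the entire mathematical content sits in the classification step, which you invoke rather than prove; since the paper itself treats the whole theorem as a black box, that is acceptable here, though note that as a self-contained proof your text would be incomplete at exactly that point. Two small remarks: first, your two middle steps are slightly redundant --- once you pass to the integral lattice generated by the $\mathbf{x}_v$, its norm-$2$ vectors automatically form a reduced simply-laced root system (reflection-closed and with inner products in $\{0,\pm1,\pm2\}$ by integrality and Cauchy--Schwarz), whose lines are star-closed, so the explicit star-closure construction is subsumed by the lattice argument; second, for completeness one should observe that this lattice is positive definite of finite rank, so the root system is finite, before applying the classification.
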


Let $S$ be a signed graph represented by a line system $\cS$.
If $\cS$ can be embedded into $\Z^n$ for some $n$, then we say that $S$ is
\textbf{integrally represented} or that $S$ has an
\textbf{integral representation}.
By Theorem~\ref{thm:CGSS}, for a signed graph $S$ with
$\lambda_1(S) \geq -2$, $G$ has an integral representation if and only if
$S$ is represented by a subset of $D_n$ for some $n$.
Let $S$ be a connected signed graph with $\lambda_1(S) \geq -2$.
We call $S$ \textbf{exceptional} if it does not have an integral representation.
Clearly there are only finitely many exceptional signed graphs.

Let $S$ be a signed graph with smallest eigenvalue
greater than
$-2$. Assume that $S$ has an integral representation $\phi$ in $\mathbb{R}^n$.
This means that, with $m=|V(S)|$,
there exists an $n\times m$ matrix
\[
M=\begin{pmatrix} \mathbf{v}_1&\cdots&\mathbf{v}_m\end{pmatrix},
\]
with entries in $\Z$, such that
$(\mathbf{v}_i,\mathbf{v}_j) = \pm 1$ if $\{i,j\} \in E^\pm (S)$ respectively, and
$(\mathbf{v}_i,\mathbf{v}_j)= 2\delta_{i,j}$ otherwise.
We may assume that $M$ has no rows consisting only of zeros.
Since $\mathbf{v}_i\in\Z^n$, $\mathbf{v}_i$ has two entries equal to
$\pm1$, and all other entries $0$.
Let $H$ be the graph with vertex set $\{1,\dots,n\}$,
where vertices $i$ and $j$ are joined by the edge $k$ whenever 
$\mathbf{v}_k$ has $\pm1$ in its $i$th and $j$th positions.
Note that the graph $H$ may have multiple edges.
A graph without multiple edges is called \textbf{simple}.
We call $H$ the \textbf{representation graph} of $S$ associated with the
representation $\phi$.
Note that $H$ has no isolated vertex. If $S$ is connected, then so is $H$.

Let $S$ be an $m$-vertex connected signed graph having an integral representation
$\phi$ and smallest eigenvalue greater than $-2$.
Let $H$ be the $n$-vertex representation graph of $S$ associated with the
representation $\phi$.
Then by \cite[Lemma~5]{JCTB},
we have $n\in\{m,m+1\}$.
Moreover, if $n=m$, then $H$ is a unicyclic graph or a tree with a double edge and if $n=m+1$,
then $H$ is a tree.

For a simple graph $H$, we denote by $\LG(H)$ the line graph of $H$.
If $u$ and $v$ are adjacent vertices in a graph, then we denote the
edge $\{u,v\}$ by $uv$ for brevity.

Let $H$ be a unicyclic graph whose unique cycle $C$ has at least $4$ vertices and let
$G = \LG(H)$.
Then for each edge $e$ of $G$ there exists a unique
maximal clique that contains $e$.
For such a graph $G$, we denote by $\mathfrak C_G(e)$ the unique maximal clique of $G$ containing the edge $e$. 
Let $uu'$ be an edge of $\LG(C)$.
Define $\LG^\dag(H,uu^\prime)$ to be the signed graph $(V,E^{+},E^-)$, where $V = V(\LG(H))$,
\[
E^- = \left \{ uv \in E(\LG(H)) \mid v \in \mathfrak C_{\LG(H)}(uu^\prime) \right \}
\]
and $E^+ = E(\LG(H)) \backslash E^-$.
Observe that, for all edges $uu^\prime$ and $vv^\prime$ of $\LG(C)$, the graph $\LG^\dag(H,uu^\prime)$ is switching equivalent to $\LG^\dag(H,vv^\prime)$.

Let $H$ be a tree with a double edge $u$ and $u'$,
and let $H'=H-u'$ be the simple tree obtained from $H$ by removing $u'$.
We define $\LG(H)$ to be the
signed graph obtained from the line graph $\LG(H')$
by attaching a new vertex $u'$, and join $u'$ by $(+)$-edges to
every vertex of a clique in the neighborhood of $u$,
$(-)$-edges to every vertex of the other clique in the neighborhood of $u$.
Note that there are two different ways to assign signs to edges from $u'$,
but the resulting two signed graphs are switching equivalent.

\begin{theorem}[{\cite[Theorem~6]{JCTB}}]\label{thm:3}
Let $S$ be a connected integrally represented signed graph
having smallest eigenvalue greater than $-2$.
Let $H$ be the representation graph of $S$ for some integral representation.
Then one of the following statements holds:
\begin{enumerate}
\item\label{t1}
$H$ is a simple tree, and
$S$ is switching equivalent to the line graph $\LG(H)$,
\item\label{t2}
$H$ is unicyclic with an odd cycle, and
$S$ is switching equivalent to the line graph $\LG(H)$,
\item\label{t3}
$H$ is unicyclic with an even cycle $C$,
and $S$ is switching equivalent to
$\LG^\dag(H,uu^\prime)$
where $uu'$ is an edge of $\LG(C)$.
\item\label{t4}
$H$ is a tree with a double edge, and
$S$ is switching equivalent to $\LG(H)$.
\end{enumerate}
Conversely, if $S$ is a signed graph described by
\ref{t1}--\ref{t4}
above, then $S$ is integrally represented and has smallest
eigenvalue greater than $-2$.
\end{theorem}

\begin{corollary}\label{cor:1.4}
Let $S$ be a connected integrally represented signed graph
having smallest eigenvalue greater than $-2$.
Then there exists a tree $H$ such that $\LG(H)$ is switching equivalent to
$S$ with possibly one vertex removed.
\end{corollary}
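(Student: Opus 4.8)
The plan is to invoke the structural classification of Theorem~\ref{thm:3}, which splits $S$ into four cases according to its representation graph $H$, and in each case to produce a simple tree whose line graph is switching equivalent to $S$ after deleting at most one vertex. The one general tool I would record first is that switching equivalence descends to induced subgraphs: if $S$ is switching equivalent to $T$ and the underlying isomorphism carries a vertex $v$ of $S$ to a vertex $w$ of $T$, then $S-v$ is switching equivalent to $T-w$. This holds because switching at a vertex alters only the edges incident to that vertex and therefore commutes with deletion of any \emph{other} vertex; one simply restricts the switching set and the isomorphism. Since the isomorphism is a bijection, I am free to delete from $S$ the preimage of whichever distinguished vertex I wish to remove on the line-graph side.

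Case (i) is immediate: $H$ is already a simple tree with $S$ switching equivalent to $\LG(H)$, so $H$ works with no vertex removed. For case (ii), where $H$ is unicyclic with an odd cycle $C$, I would pick any edge $e$ of $C$; then $H-e$ is a simple tree. The point is that deleting a vertex of a line graph corresponds to deleting the associated edge of the base graph: two edges $f,g\neq e$ share a vertex in $H-e$ exactly when they do in $H$, so $\LG(H-e)=\LG(H)-e$ as unsigned graphs. The descent lemma then gives $\LG(H-e)$ switching equivalent to $S$ with one vertex removed.

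The signed cases (iii) and (iv) carry the only genuinely new observation, and it is a simple one: in each construction all negative edges are incident to a single distinguished vertex, so deleting that vertex erases every sign at once. In case (iii) the definition gives $E^-=\{uv : v\in\mathfrak C_{\LG(H)}(uu')\}$, a set of edges all incident to $u$; hence $\LG^\dag(H,uu')-u$ is the all-positive graph $\LG(H)-u=\LG(H-u)$, and $H-u$ is a tree because $u$ lies on the cycle. In case (iv) the attached vertex $u'$ carries, by construction, all the signed edges, so $\LG(H)-u'=\LG(H')$ with $H'=H-u'$ a simple tree. Applying the descent lemma to the switching equivalences $S\sim\LG^\dag(H,uu')$ and $S\sim\LG(H)$ respectively yields the line graph of a tree switching equivalent to $S$ minus one vertex.

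I expect no substantial obstacle: Theorem~\ref{thm:3} does the heavy lifting, and the remaining work is the verification that vertex deletion commutes with both line-graph formation and switching. The most delicate point is purely bookkeeping---confirming that in cases (iii) and (iv) no negative edge survives the deletion---which follows directly from the fact that every negative edge is incident to the deleted vertex.
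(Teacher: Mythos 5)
Your proposal is correct and follows essentially the same route as the paper: both invoke Theorem~\ref{thm:3}, treat case (i) directly, and in cases (ii)--(iv) delete the one vertex of the line graph (an edge of the cycle, the vertex $u$, or the vertex $u'$, respectively) so that what remains is the all-positive line graph of a tree. The only difference is cosmetic: you state explicitly the fact that switching equivalence is inherited by vertex-deleted subgraphs, which the paper's proof uses implicitly, and your observation that in cases (iii) and (iv) every negative edge is incident to the deleted vertex is exactly the paper's justification that $\LG^\dag(H,uu')-u=\LG(H-u)$ and $\LG(H)-u'=\LG(H-u')$.
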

\begin{proof}
The assertion is clear if Theorem~\ref{thm:3}\ref{t1} holds.
For the case \ref{t2} of Theorem~\ref{thm:3},
let $e$ be an edge of $H$ contained in the unique cycle of $H$.
Regarding $e$ as a vertex of $\LG(H)$, we have
$\LG(H)-e=\LG(H-e)$. Since $S$ is switching equivalent to $\LG(H)$,
$\LG(H)-e=\LG(H-e)$ is switching equivalent to $S$ with one vertex
removed. Since $H-e$ is a tree, the assertion holds.

For the case \ref{t3} of Theorem~\ref{thm:3}, we proceed in a similar manner.
Since $\LG^\dag(H,uu^\prime)-u$ has no $(-)$-edge, we have
$\LG^\dag(H,uu^\prime)-u=\LG(H-u)$, which is the
line graph of a tree.
Since $S$ is switching equivalent to $\LG^\dag(H,uu^\prime)$,
$\LG(H-u)$ is switching equivalent to $S$ with one vertex
removed.

Finally, for the case \ref{t4} of Theorem~\ref{thm:3}, let
$H$ be a tree with a double edge $u$ and $u'$.
Then $\LG(H)-u'=\LG(H-u')$ by construction.
Since $S$ is switching equivalent to $\LG(H)$,
$\LG(H-u')$ is switching equivalent to $S$ with one vertex
removed. Since $H-u'$ is a simple tree, the assertion holds.
\end{proof}

\section{Hoffman's limit theorem}
For $z\in\C$ and $\ep>0$, we define
\[\cD(z;\ep)=\{w\in\C\mid |w-z|<\ep\}.\]
By a polynomial, we mean a polynomial with coefficients in $\C$.

\begin{lemma}\label{lem:59b}
Let $(f_n(z))_{n\in\N}$ be a sequence of polynomials of bounded degree.
Suppose that this sequence converges to a nonzero polynomial $f(z)$
coefficient-wise.
Then the following statements are equivalent for $\zeta\in\C$.
\begin{enumerate}
\item\label{R1} $f(\zeta)=0$,
\item\label{R2} for every $\ep>0$, there exists $n_0(\ep)\in\N$ such that
\begin{equation}\label{59R2}
\forall n>n_0(\ep),\;\{z\in\cD(\zeta;\ep)\mid f_n(z)=0\}\neq\emptyset.
\end{equation}
\end{enumerate}
\end{lemma}
\begin{proof}
Since $\{z\in\C\mid f(z)=0\}$ is finite, there exists
$\ep_0>0$ such that
\[\overline{\cD(\zeta,\ep_0)}\cap\{z\in\C\mid f(z)=0\}\subseteq\{\zeta\}.\]
For $\ep\in(0,\ep_0]$, define
\[c_\ep=\min\{|f(z)|\mid z\in\C,\;|z-\zeta|=\ep\}.\]
Then $c_\ep>0$, and hence
there exists $m_0(\ep)\in\N$ such that
\[\forall n>m_0(\ep),\;
\max\{|f_n(z)-f(z)|\mid z\in\C,\;|z-\zeta|=\ep\}<c_\ep.\]
This implies that, for $n>m_0(\ep)$,
$|f_n(z)-f(z)|<|f(z)|$ for $z\in\C$, $|z-\zeta|=\ep$.
By Rouch\'e's theorem \cite[Theorem~1.3.7]{RS}, we have, as multisets,
\begin{equation}\label{59R1}
\forall n>m_0(\ep),\;|\{z\in\cD(\zeta,\ep)\mid f(z)=0\}|=
|\{z\in\cD(\zeta,\ep)\mid f_n(z)=0\}|.
\end{equation}

\ref{R1}$\Rightarrow$\ref{R2}.
Let $\ep>0$. Define $n_0(\ep)=m_0(\min\{\ep,\ep_0\})$, and let $n>n_0(\ep)$.
Since $\zeta\in\{z\in\cD(\zeta,\ep)\mid f(z)=0\}$, the left-hand side
of \eqref{59R1} is nonzero. Thus $\{z\in\cD(\zeta,\ep)\mid f_n(z)=0\}
\neq\emptyset$. This proves \eqref{59R2}.

\ref{R2}$\Rightarrow$\ref{R1}.
Let $\ep\in(0,\ep_0]$. Then for $n>\max\{m_0(\ep),n_0(\ep)\}$,
\eqref{59R2} and \eqref{59R1} imply
\[\cD(\zeta;\ep)\cap\{z\in\C\mid f(z)=0\}\neq\emptyset.\]
Since $\ep\in(0,\ep_0]$ was arbitrary, we conclude $f(\zeta)=0$.
\end{proof}

\begin{lemma}\label{lem:59d}
Let $(f_n(z))_{n\in\N}$ be a sequence of real-rooted polynomials of bounded degree.
Suppose that this sequence converges to a nonzero real-rooted polynomial $f(z)$
coefficient-wise, and the limit
\[\delta=\lim_{n\to\infty}\min\{x\in\R\mid f_n(x)=0\}\]
exists. Then $f(x)$ has a real root and
\[\delta=\min\{x\in\R\mid f(x)=0\}.\]
\end{lemma}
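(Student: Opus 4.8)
The plan is to reduce everything to Lemma~\ref{lem:59b}, which already establishes the exact correspondence between roots of the limit polynomial $f$ and accumulation of roots of the $f_n$. Let me write $\delta_n=\min\{x\in\R\mid f_n(x)=0\}$ for the smallest real root of $f_n$, so that $\delta=\lim_{n\to\infty}\delta_n$ by hypothesis. First I would verify that $f$ has a real root by applying the implication (ii)$\Rightarrow$(i) of Lemma~\ref{lem:59b} to the point $\zeta=\delta$. Indeed, for any $\ep>0$, once $n$ is large enough that $|\delta_n-\delta|<\ep$, the root $\delta_n$ lies in $\cD(\delta;\ep)$, so the set $\{z\in\cD(\delta;\ep)\mid f_n(z)=0\}$ is nonempty; this is precisely condition (ii) at $\zeta=\delta$, and Lemma~\ref{lem:59b} then gives $f(\delta)=0$. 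In particular $\delta$ is a real root of $f$, so the set $\{x\in\R\mid f(x)=0\}$ is nonempty and $\min\{x\in\R\mid f(x)=0\}\leq\delta$.

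It remains to prove the reverse inequality, i.e.\ that $\delta$ is in fact the \emph{smallest} real root of $f$. Suppose for contradiction that $f$ has a real root $\mu<\delta$; set $\ep=(\delta-\mu)/2>0$. Applying the implication (i)$\Rightarrow$(ii) of Lemma~\ref{lem:59b} at $\zeta=\mu$, there exists $n_0$ such that for all $n>n_0$ the disk $\cD(\mu;\ep)$ contains a root of $f_n$. Since $f_n$ is real-rooted and $\cD(\mu;\ep)$ is a disk centered at a real point with radius $\ep$, any root of $f_n$ inside it is real and lies in the interval $(\mu-\ep,\mu+\ep)$; in particular $f_n$ has a real root strictly less than $\mu+\ep=(\mu+\delta)/2<\delta$. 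Hence $\delta_n<(\mu+\delta)/2$ for all large $n$, whence $\delta=\lim_n\delta_n\leq(\mu+\delta)/2<\delta$, a contradiction.

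The only place requiring a little care—and what I expect to be the main (if modest) obstacle—is the step asserting that a root of the real-rooted polynomial $f_n$ lying in a complex disk $\cD(\mu;\ep)$ centered at a real number must itself be real and hence fall in the real interval $(\mu-\ep,\mu+\ep)$. This uses the real-rootedness hypothesis essentially: without it, $f_n$ could have a non-real root near $\mu$ with no corresponding small real root, and the bound on $\delta_n$ would fail. Once this observation is in hand, the two inequalities combine to give $\delta=\min\{x\in\R\mid f(x)=0\}$, completing the proof.
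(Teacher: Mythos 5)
Your proposal is correct and follows essentially the same route as the paper's own proof: apply Lemma~\ref{lem:59b}(ii)$\Rightarrow$(i) at $\zeta=\delta$ to get $f(\delta)=0$, then use (i)$\Rightarrow$(ii) at a hypothetical smaller real root together with real-rootedness of the $f_n$ to force $\delta_n<(\mu+\delta)/2$ for large $n$, contradicting $\delta_n\to\delta$. Your closing observation is also exactly the point the paper invokes: since each $f_n$ is real-rooted, any root of $f_n$ in the disk is automatically real, which is what turns the complex-analytic conclusion of Lemma~\ref{lem:59b} into a bound on the smallest real root.
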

\begin{proof}
By Lemma~\ref{lem:59b}\ref{R2}$\Rightarrow$\ref{R1}, we obtain $f(\delta)=0$.
Suppose $\zeta<\delta$ and $f(\zeta)=0$. Then by
Lemma~\ref{lem:59b}\ref{R1}$\Rightarrow$\ref{R2}, 
there exists $n_0\in\N$ such that
\[\forall n>n_0,\;\{x\in\cD(\zeta;(\delta-\zeta)/2)\mid f_n(x)=0\}\neq\emptyset.\]
Since $f_n(z)$ are real-rooted, this implies
\[\forall n>n_0,\;\exists x<\frac{\zeta+\delta}{2},\;
f_n(x)=0.\]
Thus, we obtain
\[\lim_{n\to\infty}\min\{x\in\R\mid f_n(x)=0\}\leq\frac{\zeta+\delta}{2}
<\delta,\]
which contradicts the assumption.
\end{proof}

\begin{lemma}\label{lem:59e}
Let
\[g(t,z)=h(z)t^N+\sum_{i=0}^{N-1}h_i(z)t^i\in\C[t,z]\]
be a polynomial, where $h(z)$ is a nonzero real-rooted polynomial.
Suppose that the sequence $(g(t,z))_{t\in\N}$ of polynomials in $z$
satisfies the following conditions:
\begin{enumerate}
\item\label{g1} $g(t,z)$ is real-rooted for all $t\in\N$,
\item\label{g2} $\delta=\lim_{t\to\infty}\min\{x\in\R\mid g(t,x)=0\}$ exists.
\end{enumerate}
Then
\[\delta=\min\{x\in\R\mid h(x)=0\}.\]
\end{lemma}
\begin{proof}
Let
\[f_t(z)=\frac{1}{t^N}g(t,z)=h(z)+\sum_{i=0}^{N-1}h_i(z)t^{i-N}
\quad(t\in\N).\]
Then $(f_t(z))_{t\in\N}$ is a sequence of real-rooted polynomials of bounded
degree, and it converges to $h(z)$ coefficient-wise.
The result follows from Lemma~\ref{lem:59d}.
\end{proof}

\begin{lemma}[{\cite[Lemma~3.32]{Zhan}}]\label{lem:1x}
Let
\[\tilde{A}=
\begin{bmatrix}A&B\\ B^*&D\end{bmatrix}\]
be a Hermitian matrix, and suppose $D$ is positive definite. Then
$\tilde{A}$ is positive semidefinite if and only if
$A-BD^{-1}B^*$ is positive semidefinite.
\end{lemma}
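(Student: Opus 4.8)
The plan is to use the Schur complement factorization of $\tilde A$, which reduces the positive semidefiniteness of the whole block matrix to that of the Schur complement $A - BD^{-1}B^*$. The key observation is that since $D$ is positive definite, it is invertible, so we may perform a block congruence transformation that decouples the two blocks.

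First I would exhibit the block $LDL^*$-type factorization. Using the invertibility of $D$, one verifies directly that
\[
\tilde A =
\begin{bmatrix} I & BD^{-1}\\ 0 & I\end{bmatrix}
\begin{bmatrix} A-BD^{-1}B^* & 0\\ 0 & D\end{bmatrix}
\begin{bmatrix} I & 0\\ D^{-1}B^* & I\end{bmatrix}.
\]
This is a routine multiplication: expanding the right-hand side recovers the $(1,1)$ block as $(A-BD^{-1}B^*)+BD^{-1}B^* = A$, the $(1,2)$ block as $B$, the $(2,1)$ block as $B^*$, and the $(2,2)$ block as $D$. Note the two outer matrices are Hermitian conjugates of one another, so this is a congruence $\tilde A = P^* \Lambda P$ with $P=\begin{bmatrix} I & 0\\ D^{-1}B^* & I\end{bmatrix}$ invertible (it is unit lower triangular in block form, hence has block-triangular inverse).

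Next I would invoke Sylvester's law of inertia, or more directly the elementary fact that congruence by an invertible matrix preserves positive semidefiniteness: $\tilde A = P^*\Lambda P \succeq 0$ if and only if $\Lambda \succeq 0$, because for any vector $x$ we have $x^*\tilde A x = (Px)^* \Lambda (Px)$ and $P$ is a bijection. Since $\Lambda$ is block-diagonal with blocks $A-BD^{-1}B^*$ and $D$, it is positive semidefinite if and only if both blocks are. But $D$ is positive definite by hypothesis, so $\Lambda \succeq 0$ reduces precisely to $A - BD^{-1}B^* \succeq 0$. Combining the two equivalences gives the claim.

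I do not anticipate a genuine obstacle here, as the result is a standard Schur complement fact; the only point requiring care is bookkeeping the Hermitian adjoints correctly (writing $B^*$ and $D^{-1}B^*$ rather than transposes), so that the outer factors in the congruence are genuinely conjugate and $\Lambda$ comes out Hermitian. Everything else is a direct block computation together with the inertia-preserving property of congruence.
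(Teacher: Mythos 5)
Your proof is correct: the block congruence $\tilde A = P^*\Lambda P$ with $P=\begin{bmatrix} I & 0\\ D^{-1}B^* & I\end{bmatrix}$ checks out (noting $(D^{-1}B^*)^* = BD^{-1}$ since $D^{-1}$ is Hermitian), and invertibility of $P$ plus positive definiteness of $D$ gives the equivalence. The paper itself offers no proof, citing Zhan's Lemma~3.32, and your argument is precisely the standard Schur complement proof given there, so there is nothing to reconcile.
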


\begin{lemma}\label{lem:59c}
Let $s\geq\sqrt{2}$, and let $L\in M_{m\times n}(\C)$.
Let $D\in M_n(\C)$ be a positive definite Hermitian matrix.
If $\ell$ is the largest eigenvalue of $L^* L$, then
\[\lambda_{\min}\left(\begin{bmatrix}0&sL\\sL^*&(s^2-1)D\end{bmatrix}\right)
>-2\ell.\]
\end{lemma}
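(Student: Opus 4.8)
The plan is to reduce the claim to a single positive-definiteness statement and then invoke the Schur complement criterion of Lemma~\ref{lem:1x}. Write $\tilde A=\begin{bmatrix}0&sL\\sL^*&(s^2-1)D\end{bmatrix}$. Since $\tilde A$ is Hermitian, the assertion $\lambda_{\min}(\tilde A)>-2\ell$ is equivalent to the positive definiteness of $\tilde A+2\ell I$. Because $D$ is positive definite and $s\geq\sqrt2$ yields $s^2-1\geq1>0$, the matrix $M:=(s^2-1)D+2\ell I_n$ is positive definite, so Lemma~\ref{lem:1x} applies to
\[
\tilde A+2\ell I=\begin{bmatrix}2\ell I_m&sL\\sL^*&M\end{bmatrix},
\]
taking $M$ as the distinguished positive definite block, and reduces its positive semidefiniteness to that of the Schur complement $S:=2\ell I_m-s^2LM^{-1}L^*$.

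To recover the strict inequality I would show that $S$ is positive definite and then apply Lemma~\ref{lem:1x} to the shifted matrix $\tilde A+(2\ell-\varepsilon)I$ for $\varepsilon>0$ small: its lower-right block $M-\varepsilon I_n$ stays positive definite, and by continuity its Schur complement remains positive semidefinite once $S\succ0$, whence $\lambda_{\min}(\tilde A)\geq-2\ell+\varepsilon>-2\ell$. The positivity of $S$ itself would rest on two ingredients. First, $(s^2-1)D\succ0$ forces $M\succ2\ell I_n$, hence $M^{-1}\prec\frac{1}{2\ell}I_n$. Second, $LL^*$ and $L^*L$ share their largest eigenvalue $\ell$, so $LL^*\preceq\ell I_m$. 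For $x\in\C^m$ these give
\[
s^2\,x^*LM^{-1}L^*x=s^2\,(L^*x)^*M^{-1}(L^*x)\leq\frac{s^2}{2\ell}\,x^*LL^*x\leq\frac{s^2}{2}\,\|x\|^2,
\]
which must then be played against the diagonal term $2\ell\|x\|^2$ while carefully retaining, rather than discarding, the contribution of $(s^2-1)D$ hidden inside $M$.

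The step I expect to be the main obstacle is precisely this final estimate, since the naive chain above bounds $s^2x^*LM^{-1}L^*x$ only by $\tfrac{s^2}{2}\|x\|^2$, and $s\geq\sqrt2$ by itself does not force $\tfrac{s^2}{2}<2\ell$. The positivity of $S$ should therefore be extracted from the full matrix $M^{-1}=((s^2-1)D+2\ell I_n)^{-1}$: instead of collapsing $M$ to its scalar lower bound $2\ell I_n$, I would estimate $(L^*x)^*M^{-1}(L^*x)$ while keeping the term $(s^2-1)D$ and exploit $s^2-1\geq1$ together with the positive definiteness of $D$ to absorb the cross term. Tracking the constants so that the inequality closes strictly, and checking that the boundary case $s=\sqrt2$ does not spoil strictness, is the delicate part of the argument.
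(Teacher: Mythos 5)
Your setup (shift by $2\ell$, then Lemma~\ref{lem:1x} with the lower-right block as the positive definite corner) is fine, but the proposal stops exactly at the decisive step, so as it stands it is a gap, not a proof --- and the gap cannot be closed in the stated generality. Carrying out your own suggested refinement, i.e.\ keeping $(s^2-1)D$ inside $M=(s^2-1)D+2\ell I_n$, gives $M\succeq\bigl((s^2-1)\mu+2\ell\bigr)I_n$ with $\mu=\lambda_{\min}(D)$, hence
\[
s^2\,x^*LM^{-1}L^*x\leq\frac{s^2\ell}{(s^2-1)\mu+2\ell}\,\|x\|^2,
\]
and positivity of the Schur complement requires $s^2<2(s^2-1)\mu+4\ell$. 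From $s^2\geq2$ this follows when $\mu\geq1$ (then $2(s^2-1)\mu+4\ell\geq s^2+(s^2-2)+4\ell>s^2$ for $L\neq0$), but for small $\mu$ it fails, and this is not an artifact of the scalar bound: the lemma as stated is \emph{false} for general positive definite $D$. Take $m=n=1$, $L=(1)$ (so $\ell=1$), $D=(1/4)$, $s=10\geq\sqrt2$; then
\[
\lambda_{\min}\left(\begin{bmatrix}0&10\\10&99/4\end{bmatrix}\right)
=\frac{99/4-\sqrt{(99/4)^2+400}}{2}\approx-3.54<-2=-2\ell,
\]
and as $s\to\infty$ this family's smallest eigenvalue tends to $-\ell/\mu$. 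So the ``delicate part'' you flagged is genuinely unprovable from $s\geq\sqrt2$ alone: one must either assume $\lambda_{\min}(D)\geq1$ (e.g.\ $D=I$), or weaken the conclusion to $\lambda_{\min}>-2\ell/\mu$. Under $\mu\geq1$ your route does close, and your $\ep$-perturbation for strictness is sound (though the strict Schur criterion, $M\succ0$ and Schur complement $\succ0$ iff the block matrix $\succ0$, gives it directly). Note also that strictness fails for everyone when $L=0$, since then $\lambda_{\min}=0=-2\ell$, despite the paper's claim that this case is trivial.

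For comparison, the paper argues with a variable shift: for $\lambda<0$ it applies Lemma~\ref{lem:1x} with $-\lambda I$ as the positive definite corner and reduces to the scalar quadratic $\lambda^2-(s^2-1)\lambda-\ell s^2/\mu^1\geq0$, whose negative root it then estimates. But its proof quietly commits the same normalization your estimate runs into: the displayed matrix has lower-right block $(s^2-1-\lambda)D$, whereas the correct shift of the lemma's matrix is $(s^2-1)D-\lambda I$ (these agree only for $D=I$), and in rationalizing the root a factor $1/\mu^1$ is dropped (the root is $-\frac{2\ell s^2/\mu^1}{s^2-1+\sqrt{(s^2-1)^2+4\ell s^2/\mu^1}}$, which is \emph{not} $>-2\ell$ for small $\mu^1$). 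The paper's computation is exact and correct for $D=I$, and the case $D\succeq I$ then follows by Weyl monotonicity, since enlarging the lower-right block by $(s^2-1)(D-I)\succeq0$ can only raise $\lambda_{\min}$; that is the clean repair for both your argument and theirs. The downstream results survive in any case: Theorem~\ref{lem:2.2} only needs some lower bound independent of $t$, for which $-2\ell/\mu^1$ suffices, and Theorem~\ref{thm:limit} applies it with $D=I$.
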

\begin{proof}
Let $\lambda_1$ denote the left-hand side.
If $L=0$, then the inequality holds trivially, so assume $L\neq0$.
Then
the matrix in question contains a $2\times 2$ matrix with negative determinant.
Thus $\lambda_1<0$.
Let $\mu^1$ be the smallest eigenvalue of $D$.
For $\lambda<0$, Lemma~\ref{lem:1x} implies
\begin{align*}
\begin{bmatrix}
-\lambda I&sL\\
sL^*& (s^2-1-\lambda)D\end{bmatrix}\succeq0
&\iff
(s^2-1-\lambda)D+\frac{s^2}{\lambda}L^* L\succeq0
\nexteqv
0\succeq s^2L^* L+\lambda(s^2-1-\lambda)D
\nexteqv
0\geq \ell s^2+\lambda(s^2-1-\lambda)\mu^1
\nexteqv
\lambda^2-(s^2-1)\lambda-\frac{\ell s^2}{\mu^1}\geq 0.
\end{align*}
Thus
\begin{align*}
\lambda_1&=\max\{\lambda\in\R\mid \lambda<0,\;
\begin{bmatrix}
-\lambda I&sL\\
sL^*& (s^2-1-\lambda)I\end{bmatrix}\succeq0\}
\nexteq
\max\{\lambda\in\R\mid\lambda<0,\;
\lambda^2-(s^2-1)\lambda-\frac{\ell s^2}{\mu^1}\geq 0\}
\nexteq
\frac{s^2-1-\sqrt{(s^2-1)^2+4\ell s^2/\mu^1}}{2}
\nexteq
-\frac{2\ell s^2}{s^2-1+\sqrt{(s^2-1)^2+4\ell s^2/\mu^1}}
\\&>
-\frac{2\ell s^2}{s^2-1+\sqrt{(s^2-1)^2}}
\nexteq
-\frac{\ell s^2}{s^2-1}
\\&\geq-2\ell,
\end{align*}
since $s^2\geq2$.
\end{proof}

\begin{theorem}\label{lem:2.2}
Let $A\in M_m(\C)$ and $D\in M_n(\C)$ be Hermitian matrices, $L\in M_{m\times n}(\C)$.
Assume $D$ is positive definite.
For $t\in\N$, 
denote by $\allone_t$
the row vector of dimension $t$ all of whose entries are $1$,
and define
\begin{equation}\label{59a}
A_t=\begin{bmatrix}
A&L\otimes\allone_t\\
L^*\otimes\allone_t^\top& D\otimes(J_t-I_t)
\end{bmatrix}.
\end{equation}
Then
\[\lim_{t\to\infty}\lambda_{\min}(A_t)=\lambda_{\min}(A-LD^{-1}L^*).\]
\end{theorem}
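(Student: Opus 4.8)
The plan is to convert the spectral question into one about the roots of a single polynomial whose degree does not depend on $t$, and then to apply Lemma~\ref{lem:59e}. First I would diagonalize the cloud in the $J_t$-eigenbasis. Since $\C^t=\langle\allone_t\rangle\oplus\allone_t^{\perp}$ is the eigenspace decomposition of $J_t$ and the coupling blocks $L\otimes\allone_t$, $L^*\otimes\allone_t^{\top}$ are supported entirely on the $\allone_t$-factor, the matrix $A_t$ is unitarily similar to the orthogonal direct sum of
\[
B_t=\begin{bmatrix}A&\sqrt{t}\,L\\ \sqrt{t}\,L^*&(t-1)D\end{bmatrix}
\]
(on which $J_t-I_t$ acts as the scalar $t-1$) and $t-1$ copies of $-D$ (on $\allone_t^{\perp}$, where $J_t-I_t$ acts as $-I$). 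Hence $\det(zI-A_t)=\det(zI_{m+n}-B_t)\,\det(zI_n+D)^{t-1}$, and the spectral problem is reduced to understanding $\lambda_{\min}(B_t)$.

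Next I would apply Lemma~\ref{lem:59e} to $g(t,z):=\det(zI_{m+n}-B_t)$. This polynomial is real-rooted for each $t$ because $B_t$ is Hermitian, it has degree $m+n$ in $z$ independently of $t$, and after clearing denominators in the block identity $\det(zI_{m+n}-B_t)=\det(zI_n-(t-1)D)\,\det\!\bigl(zI_m-A-tL(zI_n-(t-1)D)^{-1}L^*\bigr)$ it is genuinely a polynomial in $t$, of degree $n$. The central computation is that, as $t\to\infty$, one has $tL(zI_n-(t-1)D)^{-1}L^*\to -LD^{-1}L^*$ while $\det(zI_n-(t-1)D)$ contributes $(-1)^n t^n\det D$ to top order; therefore the coefficient of $t^n$ in $g(t,z)$ is $(-1)^n\det(D)\,\det\!\bigl(zI_m-(A-LD^{-1}L^*)\bigr)$, a nonzero real-rooted polynomial whose smallest root is exactly $\lambda_{\min}(A-LD^{-1}L^*)$. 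Lemma~\ref{lem:59e} then delivers $\lim_{t\to\infty}\lambda_{\min}(B_t)=\lambda_{\min}(A-LD^{-1}L^*)$.

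To invoke Lemma~\ref{lem:59e} I must first establish that $\delta=\lim_t\min\{x:g(t,x)=0\}=\lim_t\lambda_{\min}(B_t)$ exists. Writing $B_t$ as the sum of $\begin{bmatrix}0&\sqrt{t}\,L\\ \sqrt{t}\,L^*&(t-1)D\end{bmatrix}$ and the fixed Hermitian matrix $A\oplus 0$, Weyl's inequality together with Lemma~\ref{lem:59c} (applied with $s=\sqrt{t}$) yields a lower bound for $\lambda_{\min}(B_t)$ that is uniform in $t$; this boundedness combines with Lemma~\ref{lem:59b}, which forces every limit point of $\lambda_{\min}(B_t)$ to be a root of the leading coefficient polynomial and hence to equal $\lambda_{\min}(A-LD^{-1}L^*)$, so the limit exists. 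The step I expect to be the main obstacle is the final reconciliation: since $\lambda_{\min}(A_t)=\min\{\lambda_{\min}(B_t),\,-\lambda_{\max}(D)\}$, one must confirm that the decoupled summand $-D$ does not depress the limit below the Schur-complement value, so that it is indeed $A-LD^{-1}L^*$ that governs $\lim_t\lambda_{\min}(A_t)$; this is precisely the point at which the positive-definiteness of $D$ and the interplay of $A$, $L$, $D$ must be brought to bear.
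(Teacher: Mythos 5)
Your reduction coincides with the paper's own proof: the same splitting $A_t\cong B_t\oplus(-D)^{\oplus(t-1)}$ (the paper invokes the block-decomposition argument of \cite{BH}), the same polynomial $g(t,z)$, the same uniform lower bound via Weyl's inequality and Lemma~\ref{lem:59c} with $s=\sqrt{t}$, and the same identification of the $t^n$-coefficient with $(-1)^n\det(D)\det\bigl(zI_m-(A-LD^{-1}L^*)\bigr)$ feeding Lemma~\ref{lem:59e}. You diverge on the existence of $\lim_t\lambda_{\min}(B_t)$: the paper gets monotonicity of $\lambda_{\min}(A_t)$ for free from the fact that $A_t$ is a principal submatrix of $A_{t+1}$ (a fact you never use), while you argue via limit points. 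Your one-line version of that argument has a hole: Lemma~\ref{lem:59b} only tells you that a limit point of $\lambda_{\min}(B_t)$ is \emph{some} root of the leading coefficient $h$, not that it equals the smallest one (also, strictly, Lemma~\ref{lem:59b}(ii) demands roots in the disc for \emph{all} large $t$, whereas a limit point supplies them only along a subsequence; the Rouch\'e argument in its proof does cover the subsequence version). To close it you need both directions: (ii)$\Rightarrow$(i) along a subsequence realizing the $\liminf$ shows $\liminf_t\lambda_{\min}(B_t)$ is a root of $h$, hence $\ge\lambda_{\min}(A-LD^{-1}L^*)$; and (i)$\Rightarrow$(ii) at $\zeta=\lambda_{\min}(A-LD^{-1}L^*)$, combined with real-rootedness of $g(t,\cdot)$, shows $\limsup_t\lambda_{\min}(B_t)\le\zeta$. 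This pinch is easily added and, unlike Lemma~\ref{lem:59d}, does not presuppose that the limit exists, so once repaired your route is actually self-contained.

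The step you single out as the main obstacle is, however, not merely delicate: it is insurmountable, because the statement is false at this level of generality. Take $m=n=1$, $A=(0)$, $L=(\ep)$ with $0<\ep<1$, $D=(1)$. For $t\ge2$, any vector $0\oplus w$ with $w\perp\allone_t$ satisfies $A_t(0\oplus w)=-(0\oplus w)$, so $-1\in\Spec(A_t)$, while $\lambda_{\min}(B_t)=\tfrac{1}{2}\bigl((t-1)-\sqrt{(t-1)^2+4t\ep^2}\bigr)\to-\ep^2$. Hence $\lim_t\lambda_{\min}(A_t)=\min\{-\ep^2,-1\}=-1\ne-\ep^2=\lambda_{\min}(A-LD^{-1}L^*)$. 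Your identity $\lambda_{\min}(A_t)=\min\{\lambda_{\min}(B_t),-\lambda_{\max}(D)\}$ is exactly right, and the missing hypothesis is $\lambda_{\min}(A-LD^{-1}L^*)\le-\lambda_{\max}(D)$, under which your plan does finish the proof. Note that the paper's own proof stops after computing $\delta=\lim_t\lambda_{\min}(B_t)$ and never reconnects to $\lambda_{\min}(A_t)$, so it elides the same point; in the intended application, Theorem~\ref{thm:limit}, the hypothesis holds automatically, since $D=I$ and condition (i) of Definition~\ref{df:Hg} provides a slim vertex with a fat neighbour, forcing a diagonal entry $\le-1$ in $A_s-CC^T$ and hence $\lambda_{\min}(\Ho)\le-1=-\lambda_{\max}(I)$. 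In short: everything up to the final reconciliation matches the paper modulo the $\limsup$/$\liminf$ repair, while the final step you flagged is a genuine gap --- indeed a counterexample --- that neither your argument nor the paper's can bridge without an extra hypothesis.
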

\begin{proof}
Clearly, $A_t$ is a principal submatrix of $A_{t+1}$. Thus
\[\lambda_{\min}(A_t)\geq\lambda_{\min}(A_{t+1}).\]

Next we show that the sequence $(\lambda_{\min}(A_t))_{t\in\N}$ is bounded from below.
Indeed, let
\begin{equation}\label{2.13}
g(t,z)=\det
\begin{bmatrix}A-z I&tL\\ L^*&(t-1)D-zI\end{bmatrix}
\in\C[t,z].
\end{equation}
Since
\[g(t,z)=\det\left(\begin{bmatrix}A&\sqrt{t}L\\
\sqrt{t}L^*& (t-1)D\end{bmatrix}-z I\right),\]
the univariate polynomial $g(t,z)$ in $z$ is real-rooted for all $t\in\N$.
For $t\geq2$, we have
\begin{align}
\min\{\lambda\in\R\mid g(t,\lambda)=0\}
&=\lambda_{\min}\left(\begin{bmatrix}
A&\sqrt{t}L\\\sqrt{t}L^*& (t-1)D\end{bmatrix}\right)
\notag\\&\geq
\lambda_{\min}(A)+\lambda_{\min}\left(\begin{bmatrix}0&\sqrt{t}L\\
\sqrt{t}L^*& (t-1)D\end{bmatrix}\right)
\notag\\&>
\lambda_{\min}(A)-2\ell
&&\text{(by Lemma~\ref{lem:59c})},
\label{59g1}
\end{align}
where $\ell$ denotes the largest eigenvalue of $L^*L$.
Let $\mu^1$ be the smallest eigenvalue of $D$. By the assumption,
we have $\mu^1>0$.
Using the block decomposition \eqref{59a}
(see \cite[Sect.~2.3]{BH}), we obtain
\begin{align*}
\lambda_{\min}(A_t)&=
\min\left\{
\Spec\begin{bmatrix}
A&tL\\
L^*& (t-1)D\end{bmatrix}
\cup\Spec
\begin{bmatrix}
A&0\\
0& D\otimes(J_t-I_t)\end{bmatrix}
\right\}
\nexteq
\min\left\{
\min\{\lambda\in\R\mid g(t,\lambda)=0\},
\lambda_{\min}(A),-\mu^1\right\}
\refby{2.13}
\\&\geq
\min\{\lambda_{\min}(A)-2\ell,-\mu^1\}
\refby{59g1}.
\end{align*}

Now, we have shown that the limit
\[\delta=\lim_{t\to\infty}\min\{\lambda\in\R\mid g(t,\lambda)=0\}\]
exists.
Since
\begin{align*}
g(t,\lambda)&=t^n
\det
\begin{bmatrix}A-\lambda I&L\\ L^*&(1-\frac{1}{t})D-\frac{\lambda}{t}I\end{bmatrix}
\nexteq
\det\begin{bmatrix}A-\lambda I&L\\ L^*&D\end{bmatrix}t^n
+\sum_{i=0}^{n-1}h_i(z)t^i
\end{align*}
for some polynomials $h_i(z)$, Lemma~\ref{lem:59e} implies
\begin{align*}
\delta&=\min\{\lambda\in\R\mid
\det\begin{bmatrix}A-\lambda I&L\\ L^*&D\end{bmatrix}=0\}
\nexteq
\min\{\lambda\in\R\mid \det(A-\lambda I-LD^{-1}L^*)=0\}
\nexteq
\lambda_{\min}(A-LD^{-1}L^*).
\end{align*}
\end{proof}

\begin{corollary}[{\cite[Lemma~2.2]{H-GEOM}}]
Let $G$ be a simple graph with $m$ vertices, and let $A$ be the
adjacency matrix of $G$.
Let $L$ be an $m\times n$ matrix with entries in $\{1,-1,0\}$.
For $t\in\N$, define $A_t$ by \eqref{59a}, where $D=I_n$.
Then
\[\lim_{t\to\infty}\lambda_{\min}(A_t)=\lambda_{\min}(A-LL^*).\]
\end{corollary}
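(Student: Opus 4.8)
The plan is to deduce this statement directly from Theorem~\ref{lem:2.2} by specializing the matrix $D$. First I would verify that all the hypotheses of Theorem~\ref{lem:2.2} are met in the present setting. Since $G$ is a simple graph, its adjacency matrix $A$ is real symmetric, hence Hermitian, and of size $m\times m$ as required; the matrix $L$ has entries in $\{1,-1,0\}\subseteq\C$, so $L\in M_{m\times n}(\C)$; and the choice $D=I_n$ gives a Hermitian matrix that is positive definite, since all of its eigenvalues equal $1$. Thus the triple $(A,D,L)=(A,I_n,L)$ falls within the scope of Theorem~\ref{lem:2.2}.

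Next, with $D=I_n$ the matrix $A_t$ defined in \eqref{59a} coincides with the one in the statement, so I would simply invoke the conclusion of Theorem~\ref{lem:2.2}, which yields
\[\lim_{t\to\infty}\lambda_{\min}(A_t)=\lambda_{\min}(A-LD^{-1}L^*).\]
Finally I would substitute $D^{-1}=I_n^{-1}=I_n$, so that $LD^{-1}L^*=LL^*$, giving the desired identity $\lim_{t\to\infty}\lambda_{\min}(A_t)=\lambda_{\min}(A-LL^*)$.

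There is essentially no obstacle here: the corollary is an immediate specialization, and all of the analytic content---the monotonicity of $\lambda_{\min}(A_t)$ in $t$, the boundedness from below via Lemma~\ref{lem:59c}, and the limit computation through the real-rootedness Lemmas~\ref{lem:59d} and~\ref{lem:59e}---has already been absorbed into the proof of Theorem~\ref{lem:2.2}. The only point worth emphasizing is that this recovers Hoffman's original Lemma~2.2 from \cite{H-GEOM}, in which $L$ records the adjacencies between a graph and appended cliques and the choice $D=I_n$ reflects that these cliques are attached along single vertices rather than along a more general Gram structure.
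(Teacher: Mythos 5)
Your proposal is correct and takes the same route as the paper, which states this as an immediate corollary of Theorem~\ref{lem:2.2} with $D=I_n$ and offers no further argument; your verification that $(A,I_n,L)$ satisfies the theorem's hypotheses and that $LD^{-1}L^*=LL^*$ is exactly the intended specialization.
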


\section{Proof of Theorem~\ref{thm:1.1S}}\label{s:Proof}

\begin{definition}\label{df:Hg}
A \textbf{\sHg} is a pair $\Ho=(H,\mu)$ where
$H=(V,E^+,E^-)$ is a signed graph with vertex set $V$ and
a labeling map $\mu:V\to\{f,s\}$,
satisfying the following conditions:
\begin{enumerate}
\item \label{en:Hg1}
every vertex with label $f$ is adjacent to at least
one vertex with label $s$;
\item \label{en:Hg2}
vertices with label $f$ are pairwise non-adjacent.
\end{enumerate}
We call a vertex with label $s$ a \textbf{slim vertex}, and
a vertex with label $f$ a \textbf{fat vertex}. We denote by
$V_s = V_s(\Ho)$ (resp.\ $V_f(\Ho)$) the set of slim (resp.\ fat)
vertices of $\Ho$.
If $E^-=\emptyset$, then we call $\Ho$ an \textbf{unsigned Hoffman graph},
or simply, a \textbf{Hoffman graph}.
\end{definition}

For a \sHg\ $\Ho$, let $A$ be its adjacency matrix,
\begin{equation}\label{A}
A=
\begin{bmatrix}
A_s & C \\
C^T & O
\end{bmatrix}
\end{equation}
in a labeling in which the fat vertices come last.
The signed graph with adjacency matrix $A_s$ is called the
\textbf{slim subgraph} of $\Ho$.
\textbf{Eigenvalues} of $\Ho$ are the eigenvalues of the
real symmetric matrix $B(\Ho)=A_s-CC^T$.
It is easy to see that, if two \sHg\ are switching equivalent
as signed graphs, then they have the same set of eigenvalues as
\sHg s.
Let $\lambda_{\min}(\Ho)$ denote the smallest eigenvalue of $\Ho$.

For a \sHg\ $\Ho$ and a positive integer $t$,
we denote by $G(\Ho,t)$ the signed graph
obtained by replacing
every fat vertex of $\Ho$
by $K_t$ consisting of $(+)$-edges.

\begin{theorem} \label{thm:limit}
Let $\Ho=(H,\mu)$ be a \sHg.
Then
\[\lim_{t\to\infty}\lambda_{\min}(G(\Ho,t))=\lambda_{\min}(\Ho).\]
\end{theorem}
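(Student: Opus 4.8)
The plan is to reduce this statement directly to the generalized Hoffman limit theorem (Theorem~\ref{lem:2.2}) by identifying the matrix $G(\Ho,t)$ as an instance of the block matrix $A_t$ from \eqref{59a}. First I would set up the block decomposition. Write the adjacency matrix of $\Ho$ in the form \eqref{A}, so that $A_s$ is the adjacency matrix of the slim subgraph on the slim vertices, $C$ records adjacencies between slim and fat vertices, and the fat-fat block is $O$ since fat vertices are pairwise non-adjacent by condition~\eqref{en:Hg2} of Definition~\ref{df:Hg}. Recall that $B(\Ho)=A_s-CC^T$, and so $\lambda_{\min}(\Ho)=\lambda_{\min}(A_s-CC^T)$ by definition.

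Next I would describe $A(G(\Ho,t))$ explicitly. Forming $G(\Ho,t)$ replaces each fat vertex by a positive clique $K_t$; each slim vertex that was adjacent to a given fat vertex becomes adjacent (via $(+)$-edges) to all $t$ copies of that fat vertex. Thus, if $\Ho$ has $n$ fat vertices, the slim-to-clique block is exactly $C\otimes\allone_t^\top$ (each column of $C$ is repeated $t$ times), the clique-internal block for a single fat vertex is $J_t-I_t$, and distinct cliques do not interact. Taking $D=I_n$, $L=C$, and $A=A_s$ in \eqref{59a}, I would verify that $A(G(\Ho,t))$ coincides with
\[
A_t=\begin{bmatrix}A_s&C\otimes\allone_t\\ C^T\otimes\allone_t^\top&I_n\otimes(J_t-I_t)\end{bmatrix},
\]
since $D\otimes(J_t-I_t)=I_n\otimes(J_t-I_t)$ is precisely the block-diagonal matrix whose $n$ diagonal blocks are the adjacency matrices $J_t-I_t$ of the $n$ positive cliques.

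With this identification in hand, the conclusion is immediate: Theorem~\ref{lem:2.2} applies with the Hermitian data $A=A_s$, $D=I_n$ (which is positive definite), and $L=C$, yielding
\[
\lim_{t\to\infty}\lambda_{\min}(G(\Ho,t))=\lim_{t\to\infty}\lambda_{\min}(A_t)=\lambda_{\min}(A_s-C I_n^{-1}C^T)=\lambda_{\min}(A_s-CC^T)=\lambda_{\min}(\Ho).
\]

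The routine but essential step, and the one most prone to error, is the bookkeeping in the second paragraph: confirming that replacing fat vertices by positive cliques produces exactly the tensor-product structure $C\otimes\allone_t$ and $I_n\otimes(J_t-I_t)$, with no spurious adjacencies between distinct cliques and with the correct $(+)$-signs throughout. Once the ordering of vertices is fixed (slim vertices first, then the $t$ copies of each fat vertex grouped together), this is a direct matching of entries, but it requires care with the Kronecker conventions so that $L\otimes\allone_t$ in \eqref{59a} genuinely reproduces the repeated columns of $C$. No genuine analytic obstacle remains, as all the limiting work has been absorbed into Theorem~\ref{lem:2.2}.
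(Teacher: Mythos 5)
Your proposal is correct and takes essentially the same approach as the paper: the paper's proof likewise identifies $A(G(\Ho,t))$ with the matrix $A_t$ of \eqref{59a}, taking $D=I$ (and $A=A_s$, $L=C$ from \eqref{A}), and then invokes Theorem~\ref{lem:2.2}. Your second paragraph simply spells out the Kronecker-product bookkeeping that the paper compresses into a single sentence.
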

\begin{proof}
If the adjacency matrix of $\Ho$ is given by \eqref{A}, then the
adjacency matrix of $G(\Ho,t)$ is \eqref{59a}, where $D=I$.
Thus, the result is immediate from Theorem~\ref{lem:2.2}.
\end{proof}

\begin{figure}[h]
\begin{center}
\begin{tikzpicture}
\filldraw [black]
(1,0) circle (12pt)
(1,2) circle (4pt) ;
\draw[thick] (1,0) -- (1,2);
\node (K) at (1,-1) {$\Ho_1$};
\end{tikzpicture}
\qquad
\begin{tikzpicture}
\filldraw [black]
(0,0) circle (12pt)
(2,0) circle (12pt)
(1,2) circle (4pt) ;
\draw[thick] (0,0) -- (1,2);
\draw[thick] (2,0) -- (1,2);
\node (K) at (1,-1) {$\Ho_2$};
\end{tikzpicture}
\qquad
\begin{tikzpicture}
\filldraw [black]
(1,0) circle (12pt)
(2,2) circle (4pt)
(0,2) circle (4pt) ;
\draw[thick] (1,0) -- (2,2);
\draw[thick] (1,0) -- (0,2);
\node (K) at (1,-1) {$\Ho_3$};
\end{tikzpicture}
\qquad
\begin{tikzpicture}
\filldraw [black]
(0,0) circle (12pt)
(0,2) circle (3pt)
(2,2) circle (3pt)
(2,0) circle (12pt) ;
\draw[thick] (0,0) -- (0,2);
\draw[thick] (2,0) -- (2,2);
\draw[thick] (2,2) -- (0,2);
\node (K) at (1,-1) {$\Ho_4$};
\end{tikzpicture}
\caption{The Hoffman graphs $\Ho_i$ ($i=1,2,3,4$)}\label{fig:234}
\end{center}
\end{figure}
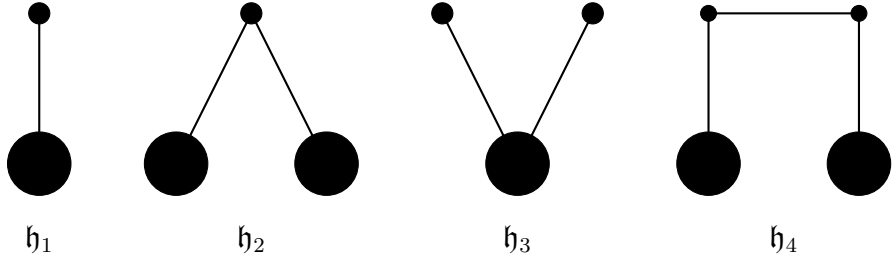

Following \cite{hlg},
let $\Ho_1,\Ho_2,\Ho_3,\Ho_4$ denote the (unsigned) Hoffman graph
defined in Fig.~\ref{fig:234}.
Then  $\lambda_{\min}(\Ho_1)=-1$, and
 $\lambda_{\min}(\Ho_i)=-2$ for $i=2,3,4$.

For the remainder of this section, we
fix a real number $\lambda$ with $-2<\lambda<-1$.
By Theorem~\ref{thm:limit}, there exist
$n_0\in\N$
such that
\begin{equation}\label{1a123}
\lambda>\lambda_{\min}(G(\Ho_i,n_0))\quad(i=2,3,4).
\end{equation}

\begin{lemma}\label{lem:5a}
Let $S$ be a signed graph.
If $\lambda_{\min}(S)\geq\lambda$ and $S$ is switching equivalent to
an unsigned graph $G$, then
\begin{align}
G&\not\supset G(\Ho_i,n_0)\quad(i=2,3,4).
\label{7a1}
\end{align}
\end{lemma}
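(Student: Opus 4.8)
The plan is to argue by contradiction, leaning on two ingredients already in place: switching equivalence preserves the spectrum, and Cauchy's eigenvalue interlacing theorem controls the smallest eigenvalue under passage to an induced subgraph. First I would record that since $S$ is switching equivalent to the unsigned graph $G$, the two signed graphs share the same multiset of eigenvalues (as noted in Section~\ref{sec:pre}), so in particular $\lambda_{\min}(G)=\lambda_{\min}(S)\geq\lambda$.

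Next, suppose toward a contradiction that $G\supset G(\Ho_i,n_0)$ for some $i\in\{2,3,4\}$. Since ``subgraph'' here means vertex-induced subgraph, and both $G$ and $G(\Ho_i,n_0)$ are unsigned graphs (the construction $G(\Ho,t)$ replaces each fat vertex by a clique of $(+)$-edges and introduces no negative edge, while the $\Ho_i$ themselves are unsigned), the adjacency matrix $A(G(\Ho_i,n_0))$ is a principal submatrix of $A(G)$. Cauchy interlacing then yields $\lambda_{\min}(G)\leq\lambda_{\min}(G(\Ho_i,n_0))$. Combining this with the previous paragraph and with \eqref{1a123}, which asserts $\lambda>\lambda_{\min}(G(\Ho_i,n_0))$, I obtain the chain $\lambda\leq\lambda_{\min}(G)\leq\lambda_{\min}(G(\Ho_i,n_0))<\lambda$, a contradiction. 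Hence no such induced copy can exist, establishing \eqref{7a1}.

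The argument is short, and I do not expect a substantial obstacle. The only points that require genuine care are bookkeeping ones: getting the direction of interlacing correct (passing to a principal submatrix can only raise the smallest eigenvalue, never lower it), and verifying that $G(\Ho_i,n_0)$ is genuinely unsigned so that induced-subgraph containment is the same notion in the signed setting and in the underlying unsigned setting, allowing me to read off a principal submatrix of $A(G)$ directly.
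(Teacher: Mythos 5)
Your proof is correct and takes essentially the same route as the paper: the paper's own argument is just the observation that $\lambda_{\min}(S)=\lambda_{\min}(G)$ by switching equivalence, whence the claim is immediate from \eqref{1a123}, with the eigenvalue interlacing step (an induced copy of $G(\Ho_i,n_0)$ would force $\lambda_{\min}(G)\leq\lambda_{\min}(G(\Ho_i,n_0))<\lambda$) left implicit. You have merely made that interlacing step, and the check that $G(\Ho_i,n_0)$ is unsigned, explicit --- both correctly.
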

\begin{proof}
Since $\lambda_{\min}(S)=\lambda_{\min}(G)$, the assertion is
immediate from \eqref{1a123}.
\end{proof}

\begin{lemma}\label{lem:i}
Let $S$ be a signed graph.
If $\lambda\leq\lambda_{\min}(S)<-1$ and $S$ is switching equivalent to
the line graph $\LG(H)$ of some connected graph
$H$, then
$\delta(S)<2n_0$.
\end{lemma}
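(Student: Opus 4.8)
The plan is to argue by contradiction: assuming $\delta(S)\geq 2n_0$, I will exhibit one of the forbidden configurations $G(\Ho_i,n_0)$ as an induced subgraph of $\LG(H)$, contradicting Lemma~\ref{lem:5a}. First some reductions. Since switching preserves the underlying graph and $S$ is switching equivalent to $\LG(H)$, we have $\delta(S)=\delta(\LG(H))=\min_{ab\in E(H)}(\deg_H(a)+\deg_H(b)-2)$ and $\lambda_{\min}(\LG(H))=\lambda_{\min}(S)\in[\lambda,-1)$; in particular Lemma~\ref{lem:5a} applies with $G=\LG(H)$, so $\LG(H)$ contains no induced copy of $G(\Ho_i,n_0)$ for $i=2,3,4$. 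Writing $A(\LG(H))+2I=N^\top N$ with $N$ the incidence matrix of $H$, the bound $\lambda_{\min}(\LG(H))\geq\lambda>-2$ forces $N^\top N$ to be positive definite, so the columns of $N$ are linearly independent; hence the connected graph $H$ is either a tree or is unicyclic with an odd cycle. I will use two consequences repeatedly: any two vertices of $H$ have at most one common neighbour (a second one would close a $4$-cycle), and $H$ has no $4$-cycle at all. Finally $\lambda_{\min}(\LG(H))<-1$ means $\LG(H)$ is not a clique, so $H$ is neither a star nor a triangle.

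Now suppose $\delta(S)\geq 2n_0$, so every edge $ab$ of $H$ satisfies $\deg_H(a)+\deg_H(b)\geq 2n_0+2$; call a vertex \emph{heavy} if its degree is at least $n_0+1$. Then each edge has a heavy endpoint, and I fix a heavy vertex $a$ and split into two cases. If $a$ has a heavy neighbour $b$, I build $G(\Ho_2,n_0)$ on the vertex $ab$ of $\LG(H)$ together with $n_0$ edges at $a$ and $n_0$ edges at $b$, chosen with pairwise distinct far endpoints; the only possible adjacencies between the two sides arise from a common neighbour of $a$ and $b$, of which there is at most one, and since $\deg_H(a),\deg_H(b)\geq n_0+1$ the disjoint choice is available (the single boundary case $\deg_H(a)+\deg_H(b)=2n_0+2$ with a triangle on $ab$ is handled by passing to another edge at $a$ or by the next case). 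If instead every neighbour of $a$ is light, then since $H$ is not a star I may pick a neighbour $b$ with $2\leq\deg_H(b)\leq n_0$; every neighbour of $b$ is then heavy, of degree at least $2n_0+2-\deg_H(b)\geq n_0+2$, so $b$ has a heavy neighbour $a'\neq a$ with $a\not\sim a'$ (if $a\sim a'$ then $a,a',b$ would be the unique odd triangle, which I treat as in the first case). Here I build $G(\Ho_4,n_0)$ from the $n_0+1$ edges at $a$ and the $n_0+1$ edges at $a'$, joined by the single edge of $\LG(H)$ that links $ab$ and $a'b$; because $b$ is the unique common neighbour of $a$ and $a'$, this is the only cross adjacency, so the copy is induced.

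The main obstacle is certifying that the configurations are \emph{induced} rather than merely present: every stray adjacency between the two cliques would come from a shared far endpoint, i.e.\ a common neighbour in $H$, and it is exactly the tree-or-odd-unicyclic structure (no $4$-cycles) that caps common neighbours at one and thereby keeps these cross adjacencies under control. The remaining work is bookkeeping of the extremal degree cases and of the at most one possible triangle, where the pair of heavy vertices must be re-selected so that the relevant stars are large enough and meet in the prescribed way; in each such case one of $G(\Ho_2,n_0)$ or $G(\Ho_4,n_0)$ still embeds as an induced subgraph. The resulting contradiction with Lemma~\ref{lem:5a} forces $\delta(S)<2n_0$, as claimed.
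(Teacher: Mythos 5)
Your proposal is correct and follows essentially the same route as the paper: assume $\delta(S)\geq 2n_0$, find an edge of $H$ whose endpoint degrees sum to at least $2n_0+2$, and in the two cases (both endpoints heavy; heavy--light--heavy path $u,v,w$ with edges $x=uv$, $y=vw$) exhibit an induced $G(\Ho_2,n_0)$ or $G(\Ho_4,n_0)$ contradicting Lemma~\ref{lem:5a} --- the paper's second case is exactly your $ab,a'b$ configuration (its citation of $G(\Ho_2,n_0)$ there appears to be a slip, since the configuration built is $G(\Ho_4,n_0)$, as you correctly identify). If anything, you are more explicit than the paper, which leaves implicit the tree/odd-unicyclic structure of $H$, the consequent bound of at most one common neighbour that certifies inducedness, and the triangle boundary cases that you flag and sketch how to resolve.
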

\begin{proof}
Let $G=\LG(H)$.
Since $\lambda_{\min}(S)<-1$, $S$ is not switching equivalent to a complete graph.
This implies that $G$ is not complete, and hence
$H$ is not a claw.
Since $H$ is not a claw, there exists
an edge $x=\{u,v\}\in E(H)$ with $\deg_H(u),\deg_H(v)>1$.

Suppose, to the contrary, that $\delta(S)\geq 2n_0$. Then
$\delta(G)\geq 2n_0$, and therefore
$\deg_H(u)+\deg_H(v)\geq 2n_0+2$.
If $\min\{\deg_H(u),\deg_H(v)\}\geq n_0+1$, then $N_G(x)$ consists of two
connected components each of which contains
$K_{n_0}$.
This contradicts \eqref{7a1}. Thus, we may assume without loss of generality
that $\deg_H(v)<n_0+1$. Then $\deg_H(u)>n_0+1$. Since $\deg_H(v)>1$, there exists
a vertex $w\in N_H(v)\setminus\{u\}$. Let $y$ denote the edge $\{v,w\}$.
Since $\deg_{G}(y)\geq 2n_0$, we have
$\deg_H(w)>n_0+1$. This implies that
the subgraph of $G$ induced by $\{x,y\}\cup N_G(x)\cup N_G(y)$
contains $G(\Ho_2,n_0)$,
contradicting \eqref{7a1}.
\end{proof}

\begin{proof}[Proof of Theorem~\ref{thm:1.1S}]
Since there are only finitely many exceptional graphs, there exists
a positive integer $d_0$ such that every exceptional graph has
minimum degree bounded by $d_0$.

Recall that we have fixed $\lambda\in(-2,-1)$. We define the value
$f(\lambda)$ of the function $f:(-2,-1)\to\R$ by
\[f(\lambda)=\max\{2n_0,d_0\}+1.\]
Let $S$ be a connected signed graph with $\lambda_{\min}(S)\geq\lambda$ and
$\delta(S)\geq f(\lambda)$. Since $\delta(S)>d_0$, we see that $S$ is not
exceptional. This means that $S$ is integrally represented.
By Corollary~\ref{cor:1.4},
there exists a tree $H$ such that $\LG(H)$ is switching equivalent to
$S$ or $S$ with one vertex removed. In the former case, Lemma~\ref{lem:i}
implies that $\lambda_{\min}(S)=-1$, and hence $S$ is switching equivalent
to a complete graph. Suppose $\LG(H)$ is switching equivalent to
$S-u$ for some vertex $u$ of $S$. Since $\lambda_{\min}(S-u)\geq\lambda_{\min}(S)
\geq\lambda$ and $\delta(S-u)\geq\delta(S)-1\geq2n_0$,
Lemma~\ref{lem:i} implies that $\lambda_{\min}(S-u)=-1$,
and hence $S-u$ is switching equivalent to a complete graph.
Since $\delta(S)>d_0$, the vertex $u$ has degree greater than $n_1$.
If $S$ is not switching equivalent to a complete graph, then
there exists a non-neighbor $u'$ of $u$ in $S$. Then the subgraph
induced on the common neighbors of $u,u'$ together with $u,u'$ themselves
has smallest eigenvalue less than $\lambda$ by \eqref{1a123}. This
implies $\lambda_{\min}(S)<\lambda$, contrary to the assumption.
Therefore, $S$ is switching equivalent to a complete graph.
\end{proof}

\begin{lemma}\label{lem:oddodd}
Let $S$ be an odd cycle with 
an odd number of $(-)$-edges. Then $S$
has smallest eigenvalue $-2$.
\end{lemma}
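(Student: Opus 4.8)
The plan is to prove the two inequalities $\lambda_{\min}(S)\ge -2$ and $\lambda_{\min}(S)\le -2$ separately. The lower bound will hold for \emph{every} signed cycle, while the parity hypotheses (odd length, odd number of $(-)$-edges) will enter only in forcing $-2$ to be attained.

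For the lower bound I would exhibit a Gram decomposition of $A(S)+2I$. Write $\sigma_e\in\{\pm1\}$ for the sign of an edge $e=\{i,j\}$ of $S$, and let $b_e\in\R^{V(S)}$ be the vector with entry $1$ at $i$, entry $\sigma_e$ at $j$, and $0$ elsewhere. Since $U(S)$ is a cycle, hence $2$-regular, summing the rank-one matrices $b_eb_e^\top$ over all edges reproduces the diagonal $2I$ (each vertex lies on exactly two edges and $\sigma_e^2=1$) and, off the diagonal, the entry $\sigma_e=(A(S))_{ij}$. Thus $A(S)+2I=\sum_{e}b_eb_e^\top\succeq0$, which gives $\lambda_{\min}(S)\ge-2$, with equality precisely when $-2$ is an eigenvalue.

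For the upper bound I would first reduce to a convenient representative by switching. The product $\prod_e\sigma_e$ of the edge signs around the cycle is unchanged by switching, because switching at a (degree-two) vertex flips exactly the two edges through it; as $S$ has an odd number of $(-)$-edges, this product equals $-1$. Processing the path edges $\{1,2\},\{2,3\},\dots,\{n-1,n\}$ in order and switching at $2,3,\dots,n$ as needed, I can make all of these positive, after which the sign of the remaining edge $\{n,1\}$ is forced to be the invariant product $-1$. Hence $S$ is switching equivalent to the signed cycle $S_0$ with $\{n,1\}$ negative and every other edge positive, and since switching preserves the spectrum it suffices to locate the eigenvalue $-2$ in $S_0$.

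Finally I would exhibit the eigenvector. Put $x=\bigl((-1)^1,(-1)^2,\dots,(-1)^n\bigr)^\top$. At an interior vertex $i$ (with $2\le i\le n-1$) the eigenvalue equation reads $x_{i-1}+x_{i+1}=-2x_i$, which the alternating vector satisfies; at the two vertices $1$ and $n$ incident to the negative edge the equations become $x_2-x_n=-2x_1$ and $x_{n-1}-x_1=-2x_n$, and a direct check shows both hold precisely because $n$ is odd, so that $(-1)^n=-1$. Therefore $A(S_0)x=-2x$, whence $-2\in\Spec(S_0)$, and combined with the lower bound this yields $\lambda_{\min}(S)=-2$. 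The only genuinely delicate points are the switching reduction to $S_0$ and the parity bookkeeping at vertices $1$ and $n$, where the oddness of $n$ is used; the remaining verifications are routine.
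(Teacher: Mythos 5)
Your proof is correct, but it takes a genuinely different route from the paper's, which is a two-line argument: since the product of the edge signs around a cycle is a switching invariant, and it equals $-1$ both for $S$ (odd number of negative edges) and for the all-negative cycle of odd length $n$ (product $(-1)^n=-1$), the paper switches $S$ into the odd cycle with \emph{all} edges negative; its adjacency matrix is $-A(C_n)$, so its spectrum is the negative of that of the connected $2$-regular graph $C_n$, giving $\lambda_{\min}=-\lambda_{\max}(C_n)=-2$ with both bounds falling out at once from this negation symmetry. You instead switch to the representative with a single negative edge, prove the lower bound via the Gram factorization $A(S)+2I=\sum_e b_e b_e^\top\succeq 0$, and attain $-2$ with the explicit alternating eigenvector. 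Every step of yours checks out: the rank-one sum does reproduce $2I$ on the diagonal because the cycle is $2$-regular; the sign product is a switching invariant since switching at a vertex flips exactly the two cycle edges through it; and the eigenvector equations at vertices $1$ and $n$ reduce to $1-(-1)^n=2$ and $(-1)^{n-1}+1=2$, which hold precisely because $n$ is odd. What your version buys: the Gram decomposition shows $\lambda_{\min}\geq -2$ for \emph{every} signed cycle (indeed it is the signed-incidence factorization underlying the paper's line signed graphs in Section~5), so the two parity hypotheses are cleanly isolated in the attainment step, and you never need the spectrum of $C_n$. What the paper's version buys: brevity --- choosing the all-negative representative makes both the semidefiniteness argument and the eigenvector computation unnecessary.
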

\begin{proof}
The signed graph $S$ is switching equivalent to an odd cycle in
which all edges are $(-)$-edges. Since this is the negative of 
a $2$-regular graph, it has smallest eigenvalue $-2$.
\end{proof}

\begin{lemma}\label{lem:comp}
Let $S$ be a signed graph whose underlying graph $U(S)$ is complete.
If $\lambda_{\min}(S)\geq-\sqrt{2}$, then 
$S$ is switching equivalent to a complete graph.
\end{lemma}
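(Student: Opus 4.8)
The plan is to argue by contraposition, exploiting the fact that a signed graph whose underlying graph is complete is switching equivalent to an (all-positive) complete graph precisely when it is \emph{balanced}, and then to detect imbalance through a negative triangle, whose smallest eigenvalue is already $-2<-\sqrt2$. Since switching leaves the underlying graph unchanged, if $S$ is switching equivalent to an unsigned graph $G$ then $G=U(S)$, which is complete; thus ``$S$ is switching equivalent to a complete graph'' is equivalent to $S$ being balanced, i.e.\ the product of edge signs along every cycle equals $+1$.

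First I would reduce balance to a condition on triangles, using that $U(S)$ is complete on $n=|V(S)|$ vertices. Fix a vertex $v_0$, set $\epsilon(v_0)=1$ and $\epsilon(v)=(A(S))_{v_0 v}$ for $v\neq v_0$. If every triangle through $v_0$ is positive, then for any edge $\{u,v\}$ with $u,v\neq v_0$ the triangle on $\{v_0,u,v\}$ forces $(A(S))_{uv}=(A(S))_{v_0 u}(A(S))_{v_0 v}=\epsilon(u)\epsilon(v)$, while the edges $\{v_0,v\}$ satisfy this by definition. Hence positivity of all triangles makes the sign function a coboundary, so $S$ is balanced. Taking the contrapositive, if $S$ is \emph{not} switching equivalent to a complete graph, then some triangle on a vertex set $\{u,v,w\}$ is negative, meaning the product of its three edge signs is $-1$.

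Next I would use the induced subgraph on $\{u,v,w\}$. As a signed triangle with negative sign product, and since this product is a switching invariant (switching at a vertex flips exactly two of the three edges), it is switching equivalent to the triangle all of whose edges are negative; the latter has adjacency matrix $I_3-J_3$, with eigenvalues $-2,1,1$, so its smallest eigenvalue is $-2$. Because switching equivalent signed graphs share the same eigenvalues, the induced signed triangle on $\{u,v,w\}$ has smallest eigenvalue $-2$. Applying Cauchy interlacing to the principal submatrix of $A(S)$ indexed by $\{u,v,w\}$ gives $\lambda_{\min}(S)\le -2<-\sqrt2$, contradicting the hypothesis $\lambda_{\min}(S)\ge-\sqrt2$. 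Therefore every triangle is positive, $S$ is balanced, and $S$ is switching equivalent to a complete graph.

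The only steps demanding care are the reduction of balance to triangles in the complete graph and the observation that a negative triangle appears as an induced (principal) subgraph; both are elementary. Notably, the same argument works verbatim with $-\sqrt2$ replaced by any bound strictly greater than $-2$, so the specific value $-\sqrt2$ is dictated by the later application rather than by this lemma.
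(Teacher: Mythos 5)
Your proof is correct and takes essentially the same route as the paper: your coboundary normalization at $v_0$ is exactly the paper's opening move of switching so that all edges at one vertex are positive, and both arguments then reduce, via completeness of $U(S)$, to the fact that an unbalanced triangle has smallest eigenvalue $-2<-\sqrt{2}$, excluded by interlacing. The only cosmetic difference is that the paper keeps the unbalanced triangle in its one-negative-edge form rather than switching it to the all-negative form $I_3-J_3$.
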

\begin{proof}
After switching, we may assume that there
exists a vertex $x$ of $S$ such that all edges incident with $x$ are
positive. Since a triangle with one negative edge has smallest
eigenvalue $-2$ while $\lambda_{\min}(S)\geq-\sqrt{2}$, it follows
that $S$ cannot contain such a triangle. This implies that 
all edges of $S$ not containing $x$ are positive. Therefore,
$S$ itself is a complete graph.
\end{proof}

\begin{proposition}\label{prop:Seidel}
Let $S$ be a connected signed graph with smallest eigenvalue greater than
$-\sqrt{2}$. Then $S$ is switching equivalent to a complete graph.
\end{proposition}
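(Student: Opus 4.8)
The plan is to deduce the statement from Lemma~\ref{lem:comp}: it suffices to prove that the underlying graph $U(S)$ is complete, for then the hypothesis $\lambda_{\min}(S)>-\sqrt{2}$ certainly gives $\lambda_{\min}(S)\ge-\sqrt{2}$, and Lemma~\ref{lem:comp} applies verbatim to conclude that $S$ is switching equivalent to a complete graph. Thus the entire proof reduces to the purely structural claim that a connected signed graph with smallest eigenvalue greater than $-\sqrt{2}$ cannot have two non-adjacent vertices.

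The key observation I would record first is that every signed path on three vertices has smallest eigenvalue exactly $-\sqrt{2}$. Indeed, if $a,b\in\{\pm1\}$ denote the signs of its two edges, then its adjacency matrix has characteristic polynomial $\lambda^{3}-(a^{2}+b^{2})\lambda=\lambda(\lambda^{2}-2)$, whose roots are $0,\pm\sqrt{2}$, independently of the choice of $a$ and $b$. Hence the smallest eigenvalue of any signed $P_{3}$ equals $-\sqrt{2}$, regardless of how its edges are signed.

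Next I would argue by contradiction that $U(S)$ is complete. If it were not, then since $S$ is connected there would exist two non-adjacent vertices joined by a shortest path of length at least two; the first three vertices of such a path are pairwise distinct, with the two outer ones non-adjacent, so they induce a subgraph of $S$ whose underlying graph is $P_{3}$. The adjacency matrix of this induced signed subgraph is a principal submatrix of $A(S)$, so by Cauchy's interlacing theorem $\lambda_{\min}(S)\le\lambda_{\min}$ of this signed $P_{3}$, which equals $-\sqrt{2}$ by the previous paragraph. This contradicts $\lambda_{\min}(S)>-\sqrt{2}$. Therefore $U(S)$ is complete, and Lemma~\ref{lem:comp} finishes the proof.

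I expect no genuinely hard step here: the argument is essentially a one-line interlacing estimate combined with Lemma~\ref{lem:comp}. The only points requiring care are the (routine) verification that the induced $P_{3}$ has smallest eigenvalue $-\sqrt{2}$ for both sign patterns of its edges, and the correct orientation of the interlacing inequality for principal submatrices of a real symmetric matrix. It is worth noting that this approach sidesteps the structural classification of Theorem~\ref{thm:3} entirely, and in particular avoids having to treat exceptional signed graphs separately.
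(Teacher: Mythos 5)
Your proof is correct and follows essentially the same route as the paper: the paper likewise observes that a non-complete connected $S$ would contain an induced signed $2$-path, which has smallest eigenvalue $-\sqrt{2}$ regardless of edge signs, and then invokes Lemma~\ref{lem:comp}. You merely make explicit the details the paper leaves implicit (the shortest-path argument producing the induced $P_3$, the sign-independent characteristic polynomial $\lambda(\lambda^2-2)$, and the interlacing inequality).
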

\begin{proof}
If the underlying graph of $S$ is not complete, then $S$ contains a signed
$2$-path, which has smallest eigenvalue $-\sqrt{2}$. This contradiction
shows that the underlying graph of $S$ must be complete.
The result then follows from Lemma~\ref{lem:comp}.
\end{proof}

As a consequence of Proposition~\ref{prop:Seidel},
the values of the function $f$ in Theorem~\ref{thm:1.1S} on the interval
$(-\sqrt{2},-1)$ can be arbitrary, since the conclusion of
Theorem~\ref{thm:1.1S} holds for $\lambda\in(-\sqrt{2},-1)$ without
any assumption on the minimum degree $\delta(S)$.

A natural question is to determine the smallest possible value
of $f(-\sqrt{2})$ so that $\delta(S)\geq f(-\sqrt{2})$ and
$\lambda_{\min}(S)\geq-\sqrt{2}$ implies that $S$ is switching
equivalent to a complete graph.
In addition to the $2$-path, there is another signed graph
with smallest eigenvalue $-\sqrt{2}$, namely, a $4$-cycle
with one $(-)$-edge. It has adjacency matrix
\[ A=\begin{bmatrix} 0&1&0&-1\\ 1&0&1&0\\ 0&1&0&1\\ -1&0&1&0\end{bmatrix},\]
and we have $A^2=2I$. 
Its underlying graph is regular of valency $2$. So we must take
$f(-\sqrt{2}) > 2$ to exclude this graph.
In fact, $f(-\sqrt{2}) = 3$ does give the correct conclusion.

\begin{proposition}\label{prop:12}
If $S$ is a connected signed graph with $\lambda_{\min}(S)\geq-\sqrt{2}$
and $\delta(S)\geq 3$, then $S$ is
switching equivalent to a complete graph (and hence $\lambda_{\min}(S) = - 1$).
\end{proposition}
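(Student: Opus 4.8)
The plan is to combine Proposition~\ref{prop:Seidel}, which already handles the case $\lambda_{\min}(S)>-\sqrt{2}$, with a separate argument covering the boundary case $\lambda_{\min}(S)=-\sqrt{2}$. So I would first dispose of the strict inequality: if $\lambda_{\min}(S)>-\sqrt{2}$, then Proposition~\ref{prop:Seidel} immediately gives that $S$ is switching equivalent to a complete graph, with no use of the degree hypothesis. Thus the real content is the case $\lambda_{\min}(S)=-\sqrt{2}$ exactly, and here the assumption $\delta(S)\geq3$ must do the work of ruling out the few ``bad'' configurations (the $2$-path and the $4$-cycle with one $(-)$-edge noted just above the statement).

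For the boundary case, by Lemma~\ref{lem:comp} it suffices to show that the underlying graph $U(S)$ is complete; then $\lambda_{\min}(S)\geq-\sqrt{2}$ forces $S$ to be switching equivalent to a complete graph. So I would argue by contradiction: suppose $U(S)$ is not complete. Then there exist two non-adjacent vertices $u,v$. Since $U(S)$ is connected, I would look at a shortest path between $u$ and $v$, or more locally, exploit that some vertex $w$ has a neighbour not adjacent to it somewhere nearby, to locate an induced signed $2$-path (a path on three vertices). A signed $2$-path has smallest eigenvalue $-\sqrt{2}$, and any signed graph strictly containing it as an induced subgraph together with an extra edge would typically push the smallest eigenvalue below $-\sqrt{2}$; more precisely, since $\delta(S)\geq3$ every vertex has at least three neighbours, so around the path there are enough vertices that the induced subgraph on the $2$-path plus common neighbours has smallest eigenvalue strictly less than $-\sqrt{2}$. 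The key computation will be to find, from $\delta(S)\geq3$ and the presence of a non-edge, an induced signed subgraph on a small number of vertices whose smallest eigenvalue is demonstrably $<-\sqrt{2}$, contradicting $\lambda_{\min}(S)\geq-\sqrt{2}$ by interlacing. This reduces matters to a finite check of small signed graphs on at most five or six vertices.

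The main obstacle I anticipate is the boundary case bookkeeping: unlike Proposition~\ref{prop:Seidel}, where a single $2$-path gives the contradiction via strictness, here the $2$-path only achieves $-\sqrt{2}$, not something smaller, so its mere presence is not a contradiction. The degree hypothesis $\delta(S)\geq3$ must be invoked to augment the $2$-path with additional vertices forcing the smallest eigenvalue strictly below $-\sqrt{2}$. The delicate point is that the signs on these extra edges are a priori arbitrary, so after switching (which does not change the spectrum) I would normalize the signs as much as possible---for instance, switching so that the edges of the central vertex of the $2$-path are positive---and then enumerate the finitely many sign patterns on the augmented configuration, checking in each case that interlacing yields $\lambda_{\min}<-\sqrt{2}$. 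The $4$-cycle with one $(-)$-edge, which is the genuinely tight example at valency $2$, is precisely what is excluded by $\delta(S)\geq3$, so I would make sure the argument shows any vertex of degree at least $3$ sitting on such a configuration creates the forbidden subgraph.
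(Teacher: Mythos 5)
Your overall plan---reduce to showing that $U(S)$ is complete via Lemma~\ref{lem:comp}, argue by contradiction from a non-edge to an induced signed $2$-path, and use $\delta(S)\geq 3$ to augment it to a small induced subgraph with smallest eigenvalue strictly below $-\sqrt{2}$, detected by interlacing after switching normalization---is viable, and it is organized differently from the paper's proof. The paper argues globally through cycles: since $\delta(S)\geq 3$ forces a cycle, it first rules out triangles by reducing (after switching, using Lemma~\ref{lem:oddodd} to make triangles all-positive) to the paw and to $K_{1,1,2}$, whose smallest eigenvalues $\approx -1.48$ and $\approx -1.56$ are below $-\sqrt{2}$; it then rules out longer cycles via Lemma~\ref{lem:oddodd} and the $5$-vertex path (smallest eigenvalue $-\sqrt{3}$), and finally kills the remaining case, the $4$-cycle with one negative edge, using the claw $K_{1,3}$. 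Your preliminary split into $\lambda_{\min}(S)>-\sqrt{2}$ (Proposition~\ref{prop:Seidel}) and $\lambda_{\min}(S)=-\sqrt{2}$ is harmless but unnecessary; a single argument covers both cases.

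However, as written your proposal contains one step that fails and leaves the decisive computation deferred. The claim that ``the induced subgraph on the $2$-path plus common neighbours has smallest eigenvalue strictly less than $-\sqrt{2}$'' is false: if $u\sim w\sim v$ is the induced $2$-path and $y$ is a common neighbour of $u$ and $v$ not adjacent to $w$, the subgraph induced on $\{u,w,v,y\}$ can be precisely the $4$-cycle with one negative edge, whose smallest eigenvalue is exactly $-\sqrt{2}$, so no contradiction arises; this is exactly the tight configuration you yourself flagged. The augmentation that works goes through the \emph{centre} of the path: since $\deg(w)\geq 3$, the vertex $w$ has a third neighbour $x$, and the subgraph induced on $\{u,v,w,x\}$ is the claw $K_{1,3}$, the paw, or the diamond $K_{1,1,2}$, according as $x$ is adjacent to none, exactly one, or both of $u,v$. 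In each case, either some triangle carries an odd number of negative edges, whence by Lemma~\ref{lem:oddodd} and interlacing $\lambda_{\min}(S)\leq -2$, or the signed subgraph switches to the all-positive claw, paw, or diamond (trees switch to all-positive; for the diamond, both triangles being positive determines the switching class), with smallest eigenvalues $-\sqrt{3}$, $\approx -1.48$, $\approx -1.56$ respectively, all strictly below $-\sqrt{2}$. With this trichotomy your finite check is genuinely finite (four vertices, not five or six), and, once executed, your local argument is actually shorter than the paper's cycle analysis; but the proposal stops short of it, and the one concrete augmented configuration it names is the one that does not yield a contradiction.
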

\begin{proof}
We note first that it suffices to show that the underlying graph $U(S)$
of $S$ is complete,
by Lemma~\ref{lem:comp}.
Since $S$ has minimum degree at least $3$,
$S$ must contain a cycle (since otherwise 
$S$ is a tree,
having a leaf, meaning minimum degree is 1). 

Suppose first $S$ contains a
triangle. If $U(S)$ is a triangle, then $U(S)$ is a complete graph,
so we are done by the first paragraph.
Otherwise,
$S$ has at least $4$ vertices, so
$U(S)$ must contain a triangle with one pendant edge attached, or 
$K_{1,1,2}$.
By Lemma~\ref{lem:oddodd}, $S$ can be switched to contain
one of the two graphs with all positive edges.
The former has smallest eigenvalue $\approx-1.48$, the 
latter
$\approx-1.56$, both are strictly less than $-\sqrt{2}$. 
This is a contradiction. Thus $S$ has no triangle,
so $S$ contains a
cycle of length at least $4$. $S$ cannot contain
a 
cycle of length at least $4$ with all edges positive. So $S$
contains  a
cycle of length at least $4$ with odd number of negative
edges. 
If the length is odd, then we get a contradiction by Lemma~\ref{lem:oddodd}.
So the length is even. If the
length is at least $6$, then it contains 
a
path with $5$ vertices.
A path with $5$ vertices has smallest eigenvalue $-\sqrt{3} < -\sqrt{2}$,
a contradiction. So the only possible cycle is a $4$-cycle with one
negative edge. 
Since $\delta(S)\geq3$,
$S$ strictly contains 
a
$4$-cycle with one
negative edge. 
As $S$ cannot contain
a triangle with one pendant edge attached, or 
$K_{1,1,2}$, we see that 
$S$ contains $K_{1,3}$ with smallest eigenvalue
$-\sqrt{3} < -\sqrt{2}$, a contradiction.
\end{proof}

In the original setting of Hoffman's Theorem~\ref{thm:H}, 
a much easier argument than the above shows that 
we may define $h(-\sqrt{2})=2$.

\section{A signed analogue of generalized line graphs}\label{s:genline}

\begin{definition}\label{dfn:line_sigraph}
Given a signed graph $S=(V,E^+,E^-)$, 
the \textbf{line signed graph} $\LG(S)$ is the signed graph
with vertex set $E^+\cup E^-$, and two distinct vertices are joined by
a signed edge if they are incident and the sign is the product of their signs.
\end{definition}

The definition of line signed graph can be best understood in terms of
signed incidence matrix. The \textbf{signed incidence matrix}
$B$ of a signed graph $S=(V,E^+,E^-)$ is the matrix
whose rows and columns are indexed by $V$ and $E^+\cup E^-$ respectively, 
such that its $(i,e)$-entry is equal to the sign of $e$ if $i\in e$, and otherwise
$0$. The adjacency matrix of $\LG(S)$ is then given by
$B^\top B-2I$. Note that for an unsigned graph $S$, $\LG(S)$ is nothing
but the ordinary signed graph of the graph $S$.

Line signed graphs (or sometimes called signed line graphs)
have been considered in \cite{BS,GHZ},
but our definition is different from those introduced there.
We list some properties of $\LG(S)$ together with comments
pertaining to the corresponding properties of line graphs defined
in \cite{BS,GHZ}. Note that, we denote by $-S$ the \textbf{negative} of
a signed graph $S$, which is obtained by exchanging $E^+$ and $E^-$ in $S$.
\begin{enumerate}
\item\label{L1} If $S$ is an unsigned graph, that is, $E^-=\emptyset$, then
$\LG(S)$ coincides with the ordinary line graph.
This is not true in \cite{BS,GHZ}.
Indeed, $\LG(S)$ is the line graph of $-S$ in the sense of \cite{BS},
while it is the negative of the line graph of $-S$ in the sense of \cite{GHZ}.
\item\label{L2} The line signed graph
$\LG(S)$ has smallest eigenvalue at least $-2$.
This is true in \cite{BS} but not in \cite{GHZ}.
\item\label{L3} The line signed graph
$\LG(S)$ is uniquely determined by $S$.
This is not true in \cite{BS,GHZ}, where the signed line graphs
depends on the choice of an orientation, and defined 
only up to switching equivalence.
\item\label{L4} If $S$ and $S'$ are switching equivalent, so are $\LG(S)$
and $\LG(S')$.
This is true in \cite{BS,GHZ}.
\end{enumerate}

To see the property \ref{L2}, recall that
the adjacency matrix of $\LG(S)$ is given by
$B^\top B-2I$, where $B$ is the signed incidence matrix of $S$.
Since $B^\top B$ is positive semidefinite, \ref{L2} holds.

The property \ref{L4} follows from a stronger claim that
$\LG(S)$ is switching equivalent to the line graph of the
underlying graph $U(S)$ of $S$. 
To see this, observe that every cycle in $\LG(S)$ contains
an even number of $(-)$-edges, and then invoke
\cite[Prop.~3.2]{Zas}. This stronger statement
indicates that 
spectral consideration on line signed graphs in our sense
reduces to that of line (unsigned) graphs. 
However, such a reduction will not occur in 
the generalization to follow 
(see Definition~\ref{dfn:h-line_sigraph} and 
comments after that).

Given a signed graph $S=(V(S),E^+(S),E^-(S))$, 
for convenience, we denote by $\sigma(e)$ the sign of an edge $e
\in E^+(S)\cup E^-(S)$. 
We can construct a Hoffman signed graph $\Ho=(H,\mu)$, where
$H=(V(H),E^+(H),E^-(H))$ is a signed graph, as follows. Define
\begin{align*}
V(H)&=V(S)\cup E^+(S)\cup E^-(S),\\
E^\epsilon(H)&=\{\{i,e\}\mid i\in V(S),\;e\in E^\epsilon(S),\;i\in e\}
\\&\quad\cup\{\{e,e'\}\mid e,e'\in E^+(S)\cup E^-(S),\;
|e\cap e'|=1,\;
\sigma(e)\sigma(e')=\epsilon\},
\end{align*}
where $\epsilon=\pm1$,
and $\mu:V(H)\to\{f,s\}$ by
\[\mu(x)=\begin{cases}
f&\text{if $x\in V(S)$,}\\
s&\text{otherwise.}
\end{cases}\]
Then the slim subgraph of $H$ coincides with the line signed graph of $S$
defined in Definition~\ref{dfn:line_sigraph}.
Note that every slim vertex of $H$ has exactly two fat neighbors, joined
by edges of the same sign. In other words, $H$ is obtained by gluing
Hoffman graphs $\Ho_2$ (see Figure~\ref{fig:234}) and $\Ho_2^{--}$
(see Figure~\ref{fig:2-}) appropriately. The adjacency of two slim
vertices occurs exactly when they have a fat neighbor in common, and
the sign of the edge connecting them is also determined by the 
sign of edges connecting them to fat neighbors.
This observation motivates the definitions to follow.

Let $\Ho=(H,\mu)$ be a \sHg, where $H=(V,E^+,E^-)$.
For a slim vertex $x$ of $\Ho$,
the \textbf{representing vector} of $x$ is the vector $\varphi(x)$ indexed by
the set of fat vertices, defined by
\[\varphi(x)_z=\begin{cases}
1&\text{if $\{x,z\}\in E^+$,}\\
-1&\text{if $\{x,z\}\in E^-$,}\\
0&\text{otherwise.}
\end{cases}\]
If the \sHg\ $\Ho=(H,\mu)$ is obtained from a signed graph $S$ as above, then
for two distinct slim vertices $x$ and $y$ of $H$, 
the sign of the edge $\{x,y\}$ is the inner product $(\varphi(x),\varphi(y))$.
We will axiomatize this to define decompositions of Hoffman signed graph.
Note that this concept has already been considered by Woo and
Neumaier \cite{hlg} for unsigned Hoffman graphs.

For a vertex $x$ of $\Ho$ we define $N^f(x) = N^f_{\Ho}(x)$ (resp.\
$N^s(x) = N^s_{\Ho}(x)$) the
set of fat (resp.\ slim) neighbors of $x$ in $\Ho$.
The set of all
neighbors of $x$ is denoted by $N(x) = N_{\Ho}(x)$,
	that is $N(x) = N^f(x) \cup N^s(x)$.
In a similar fashion,
for vertices $x$ and $y$
	we define $N^f(x,y)=N^f_{\Ho}(x,y)$ to be the
set
of common fat neighbors of $x$ and $y$.

A \textbf{decomposition} of a \sHg\ $\Ho$
is a family $\{\Ho^i\}_{i=1}^n$ of non-empty
induced Hoffman subgraphs of $\Ho$
satisfying the following conditions:
\begin{enumerate}
\item\label{d1}
$V(\Ho)=\bigcup_{i=1}^n V(\Ho^i)$;
\item\label{d2}
$V^s(\Ho^i) \cap V^s(\Ho^j)=\emptyset$ if $i\neq j$;
\item\label{d3}
For each $x\in V^s(\Ho^i)$,
$N^f_{\Ho}(x) \subseteq V^f(\Ho^i)$
\item\label{d4}
If $x \in V^s(\Ho^i)$, $y\in V^s(\Ho^j)$, and $i \neq j$,
the inner product $(\varphi(x),\varphi(y))$ is $1,-1,0$,
according as $\{x,y\}$ is a $(+)$-edge, $(-)$-edge, or non-edge.
\end{enumerate}
A \sHg\ $\Ho$ is said to be \textbf{decomposable} if
$\Ho$ has a decomposition $\{\Ho^i\}_{i=1}^n$ with $n \geq 2$,
and $\Ho$ is said to be \textbf{indecomposable}
if $\Ho$ is not decomposable.

\begin{definition}\label{dfn:h-switch}
Two \sHg s $\Ho$ and $\Ho'$ are \textbf{switching equivalent} if
$\Ho'$ can be obtained from $\Ho$ by
switching with respect to a subset of slim vertices of $\Ho$.
\end{definition}

For example, $\Ho_2$ is switching equivalent to $\Ho_2^{--}$, but
not to $\Ho_2^-$.

\begin{definition}\label{dfn:h-line_sigraph}
Let $\h$ be a family of
switching classes
of \sHg s.
An \textbf{$\h$-line signed graph} is
an induced Hoffman subgraph of a \sHg\
which has a decomposition $\{\Ho^i\}_{i=1}^n$
such that the
switching
class of $\Ho^i$ belongs to
$\h$ for all $i=1,\dots,n$.
\end{definition}

It is clear from the definition that the line signed graphs
are precisely the slim subgraphs of a Hoffman signed graph which admits a decomposition
all of whose components are isomorphic to $\Ho_2$ or $\Ho_2^{--}$.
Since $\Ho_2^{--}$ is switching equivalent to
$\Ho_2$, this means that every line signed graph
is an $\h$-line signed graph, where
$\h$ is the singleton set consisting of the switching class of $\Ho_2$. 
This is precisely the stronger claim mentioned in the proof
of the property \ref{L4} of line signed graphs.

We note that, however, some $\h$-line signed graph are not
switching equivalent to an unsigned graph for some other family
$\h$.  For,
let $\Ho$ be the Hoffman graph with one fat vertex having two slim
neighbors connected by a $(-)$-edge, where the edges connecting
the fat vertex and slim vertices are $(+)$-edges. Let
$\h=\{[\Ho_1],[\Ho]\}$ (see Fig.~\ref{fig:234} for the definition
of $\Ho_1$).
Then the slim graph of an 
$\h$-line signed graph
obtained by identifying the fat vertices of $\Ho_1$ and 
$\Ho$ is the
triangle with only one $(-)$-edge. Thus, it is not switching 
equivalent to an unsigned graph.

\begin{figure}
\begin{center}
\begin{tikzpicture}
\filldraw [black]
(0,0) circle (12pt)
(2,0) circle (12pt)
(1,2) circle (4pt) ;
\draw[thick] (0,0) -- (1,2);
\draw[thick,dashed] (2,0) -- (1,2);
\end{tikzpicture}
\qquad
\begin{tikzpicture}
\filldraw [black]
(0,0) circle (12pt)
(2,0) circle (12pt)
(1,2) circle (4pt) ;
\draw[thick,dashed] (0,0) -- (1,2);
\draw[thick,dashed] (2,0) -- (1,2);
\end{tikzpicture}
\caption{The Hoffman signed graphs $\Ho_2^-,\Ho_2^{--}$}\label{fig:2-}
\end{center}
\end{figure}

\begin{proposition}\label{prop:H3H2-}
Let $\h$ consist of 
three switching classes $[\Ho_2],[\Ho_2^{-}],[\Ho_3]$.
Then every $\h$-line signed graph is
switching equivalent to
an $\{[\Ho_2],[\Ho_2^{-}]\}$-line signed graph.
\end{proposition}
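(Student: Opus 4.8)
The plan is to prove the stronger assertion that an $\h$-line signed graph is already an $\{[\Ho_2],[\Ho_2^-]\}$-line signed graph; the switching equivalence in the statement then follows trivially, since a graph is switching equivalent to itself. So let $\Ho$ be an $\h$-line signed graph. By Definition~\ref{dfn:h-line_sigraph}, $\Ho$ is an induced Hoffman subgraph of some \sHg\ $\Ho'$ carrying a decomposition $\{\Ho^i\}_{i=1}^n$ in which every $\Ho^i$ is switching equivalent to $\Ho_2$, $\Ho_2^-$, or $\Ho_3$. The components lying in $[\Ho_2]$ or $[\Ho_2^-]$ are already of the desired type, so the whole task is to get rid of the components in $[\Ho_3]$. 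My plan is to enlarge $\Ho'$ by adjoining, for each such component, a single new fat vertex that allows the component to be split into two pieces of type $[\Ho_2]$ or $[\Ho_2^-]$, while leaving the induced subgraph on the old vertex set completely unchanged.

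First I would analyse a single component $\Ho^i$ in the class $[\Ho_3]$. Since switching at slim vertices changes only signs, the underlying graph of $\Ho^i$ is that of $\Ho_3$: one fat vertex $z$ and two slim vertices $x,y$ that are non-adjacent and each adjacent to $z$. Condition~(iii) in the definition of a decomposition forces $N^f_{\Ho'}(x)=N^f_{\Ho'}(y)=\{z\}$, so $\varphi(x)$ and $\varphi(y)$ are supported on the single coordinate $z$, with $\varphi(x)_z=\sigma(xz)$ and $\varphi(y)_z=\sigma(yz)$, where $\sigma(xz),\sigma(yz)\in\{\pm1\}$. I would then introduce a fresh fat vertex $w_i$, joined to $x$ by an edge of sign $\tau_x$ and to $y$ by an edge of sign $\tau_y$, with $\tau_x,\tau_y\in\{\pm1\}$ chosen so that $\tau_x\tau_y=-\sigma(xz)\sigma(yz)$. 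After this addition $\varphi(x)_{w_i}=\tau_x$ and $\varphi(y)_{w_i}=\tau_y$, so $(\varphi(x),\varphi(y))=\sigma(xz)\sigma(yz)+\tau_x\tau_y=0$, matching the non-adjacency of $x$ and $y$; moreover the induced subgraphs on $\{z,w_i,x\}$ and on $\{z,w_i,y\}$ are signed Hoffman graphs with two fat and one slim vertex, hence each lies in $[\Ho_2]$ or $[\Ho_2^-]$.

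Carrying out this step simultaneously at every $[\Ho_3]$-component, each with its own private new fat vertex, produces a \sHg\ $\Ho''\supseteq\Ho'$ together with a candidate decomposition consisting of the unaltered $[\Ho_2]$- and $[\Ho_2^-]$-components of $\{\Ho^i\}$ and all the newly created pairs. I would then check the four decomposition conditions for this family. Conditions~(i)--(iii) are immediate from the construction, and the one point that needs care---the main obstacle---is condition~(iv): I must confirm that no slim--slim adjacency of $\Ho'$ has been created or destroyed. This is exactly where the \emph{privacy} of each $w_i$ is used: because $w_i$ is adjacent only to the two slim vertices of its own former $\Ho_3$, every inner product $(\varphi(u),\varphi(v))$ between slim vertices of $\Ho'$ is unchanged, with the sole exception of the pair $(x,y)$, whose inner product I have arranged to become $0$ in agreement with its (unchanged) non-adjacency. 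Hence condition~(iv) continues to hold, and $\Ho''$ admits an $\{[\Ho_2],[\Ho_2^-]\}$-decomposition.

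Finally I would note that $\Ho'=\Ho''[V(\Ho')]$, since the only vertices and edges created in passing from $\Ho'$ to $\Ho''$ involve the new fat vertices $w_i$; consequently $\Ho=\Ho''[V(\Ho)]$ is an induced Hoffman subgraph of $\Ho''$ as well. As $\Ho''$ carries an $\{[\Ho_2],[\Ho_2^-]\}$-decomposition, Definition~\ref{dfn:h-line_sigraph} exhibits $\Ho$ as an $\{[\Ho_2],[\Ho_2^-]\}$-line signed graph, which is more than the required switching equivalence.
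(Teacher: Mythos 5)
Your proof is correct, and its core construction is the same as the paper's: adjoin a new private common fat neighbor to the two slim vertices of each $[\Ho_3]$-summand so that it splits into two one-slim-vertex summands of types $[\Ho_2]$ and $[\Ho_2^-]$ (note that with your constraint $\tau_x\tau_y=-\sigma(xz)\sigma(yz)$ the product of all four edge signs is $-1$, so you always get exactly one piece in each class, just as in the paper). The one genuine difference is how the signs are handled: the paper first switches at one or both slim vertices of each $[\Ho_3]$-summand to normalize it to $\Ho_3$ itself and then attaches the new fat vertex with one $(+)$-edge and one $(-)$-edge, which is why its conclusion is only switching equivalence; you instead choose the signs $\tau_x,\tau_y$ adaptively to match the existing summand, avoiding the preliminary switching altogether and thereby proving the slightly stronger statement that every $\h$-line signed graph already \emph{is} an $\{[\Ho_2],[\Ho_2^-]\}$-line signed graph. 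Your explicit verification of decomposition conditions (i)--(iv) --- in particular that condition (iii) forces $N^f_{\Ho'}(x)=N^f_{\Ho'}(y)=\{z\}$, and that the privacy of each $w_i$ leaves all inner products $(\varphi(u),\varphi(v))$ unchanged except for the pair $(x,y)$, which becomes $0$ in agreement with non-adjacency --- fills in checks the paper's terse proof leaves implicit, including the point that the ambient graph may be enlarged while $\Ho$ remains an induced Hoffman subgraph.
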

\begin{proof}
Suppose that a signed Hoffman graph $\Ho$ is an $\h$-line signed graph.
If a decomposition of $\Ho$
contains a summand $\Ho^i$ which is switching equivalent to
$\Ho_3$, then we may apply switching with respect to one or both of
slim vertices of $\Ho_i$ to make $\Ho^i$ isomorphic to $\Ho_3$.
Having done this, we can add a new common fat neighbor
to the two slim vertices of $\Ho^i$, where one of the edge connecting a slim vertex
to the new fat neighbor is a $(-)$-edge, the other is a $(+)$-edge.
Doing this process for each summand isomorphic to $\Ho_3$, we can convert
$\Ho_3$ to the sum of $\Ho_2$ and $\Ho_2^-$.
\end{proof}

The conclusion of Proposition~\ref{prop:H3H2-} cannot be changed to claim
the switching equivalence to an $\{[\Ho_2],[\Ho_3]\}$-line signed graph,
or to an $\{[\Ho_2^-],[\Ho_3]\}$-line signed graph. Indeed,
a triangle consisting of three $(-)$-edges is  an $\{[\Ho_2^-]\}$-line
singed graph which is not switching equivalent to an $\{[\Ho_2],[\Ho_3]\}$-line signed graph,
and a pentagon consisting of five $(+)$-edges is  an $\{[\Ho_2]\}$-line
singed graph which is not switching equivalent to an $\{[\Ho_2^-],[\Ho_3]\}$-line signed graph.

The meaning of Proposition~\ref{prop:H3H2-} for unsigned slim graphs
is as follows. If $G$ is an unsigned slim graph which is 
an $\{[\Ho_2],[\Ho_2^{-}],[\Ho_3]\}$-line signed graph, then it is
an $\{[\Ho_2],[\Ho_3]\}$-line graph, and hence a generalized
line graph by \cite[Example~2.1]{hlg}. Generalized line graphs
are also known \cite{CGSS} as graphs represented by the root system
\[D_\infty=\{\pm \mathbf{e}_i\pm \mathbf{e}_j\mid 1\leq i<j<\infty\}.\]
This fact becomes transparent by Proposition~\ref{prop:H3H2-}
since clearly, $\{[\Ho_2],[\Ho_2^-]\}$-line signed graphs 
are precisely the graphs represented by $D_\infty$,
in terms of representing vectors. Such a representation shows that
every slim $\{[\Ho_2],[\Ho_2^-]\}$-line signed graph has smallest
eigenvalue at least $-2$. Alternatively, this is a consequence of
the following proposition which generalizes 
the more general fact about decomposition of Hoffman signed graphs.

\begin{proposition}\label{prop:sum}
Suppose a \sHg\ $\Ho$ has a decomposition $\{\Ho^i\}_{i=1}^n$.
Let
\[A=\begin{bmatrix}A_s&C\\ C^T&O\end{bmatrix}\text{ and }
A_i=\begin{bmatrix}A_s^{(i)}&C_i\\ C_i^T&O\end{bmatrix}\]
be the adjacency matrices of $\Ho$ and
$\Ho^i$ in a labeling in which the fat vertices
come last.
Then $A_s-CC^T$ is the direct sum of matrices
$A_s^{(i)}-C_iC_i^T$ for $i=1,\dots,n$.
In particular,
\[\lambda_{\min}(\Ho)=\min\{\lambda_{\min}(\Ho^i) \mid 1 \leq i \leq n\}. \]
\end{proposition}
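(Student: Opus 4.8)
The plan is to prove the sharper entrywise statement that $B(\Ho):=A_s-CC^T$ is block diagonal with respect to the partition of the slim vertices furnished by the decomposition, and that the block indexed by $V^s(\Ho^i)$ equals $B(\Ho^i):=A_s^{(i)}-C_iC_i^T$. The eigenvalue assertion is then immediate, since the smallest eigenvalue of a direct sum of Hermitian matrices is the minimum of the smallest eigenvalues of the summands, and $\lambda_{\min}(\Ho)=\lambda_{\min}(B(\Ho))$ by definition. The first observation I would record is that, because $C_{x,z}=\varphi(x)_z$ for a slim vertex $x$ and a fat vertex $z$, one has $(CC^T)_{x,y}=(\varphi(x),\varphi(y))$ for all slim $x,y$. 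Conditions (i) and (ii), together with the convention that an induced Hoffman subgraph inherits the labeling, show that the slim vertices of $\Ho$ are partitioned by $V^s(\Ho^1),\dots,V^s(\Ho^n)$, so it is meaningful to speak of blocks of $B(\Ho)$ indexed by these sets.

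Next I would show that the off-diagonal blocks vanish. Fix $x\in V^s(\Ho^i)$ and $y\in V^s(\Ho^j)$ with $i\neq j$. The entry $(A_s)_{x,y}$ equals $1,-1,0$ according as $\{x,y\}$ is a $(+)$-edge, a $(-)$-edge, or a non-edge; by condition (iv) the inner product $(\varphi(x),\varphi(y))$ takes the very same value in each of these three cases. Hence $(B(\Ho))_{x,y}=(A_s)_{x,y}-(\varphi(x),\varphi(y))=0$, so there is no coupling in $B(\Ho)$ between distinct components.

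Finally I would identify the diagonal blocks, and here condition (iii) is the crucial ingredient: for $x\in V^s(\Ho^i)$ every fat neighbour of $x$ lies in $V^f(\Ho^i)$, so $\varphi(x)$ is supported on $V^f(\Ho^i)$ and its restriction there is exactly the representing vector $\varphi^{(i)}(x)$ of $x$ computed inside $\Ho^i$. Consequently, for $x,y\in V^s(\Ho^i)$ the sum defining $(\varphi(x),\varphi(y))$ receives contributions only from $z\in V^f(\Ho^i)$, giving $(\varphi(x),\varphi(y))=(\varphi^{(i)}(x),\varphi^{(i)}(y))=(C_iC_i^T)_{x,y}$. Since $\Ho^i$ is induced, $(A_s)_{x,y}=(A_s^{(i)})_{x,y}$ as well, so the $(x,y)$-entry of the $i$-th diagonal block of $B(\Ho)$ agrees with that of $A_s^{(i)}-C_iC_i^T$. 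Combining this with the previous step yields $B(\Ho)=\bigoplus_{i=1}^n B(\Ho^i)$, whence $\lambda_{\min}(\Ho)=\min_i\lambda_{\min}(\Ho^i)$.

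The computations themselves are routine bookkeeping once the indexing is fixed; I expect the only genuine obstacle to be conceptual, namely recognizing that fat vertices may be shared among the components, so that the direct-sum decomposition is with respect to the slim vertices alone. The role of condition (iii) is precisely to confine the support of each representing vector to the fat vertices of a single component, which is what makes the restricted inner product agree with the full one and guarantees that the shared fat vertices produce no cross terms in $CC^T$.
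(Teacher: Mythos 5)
Your proof is correct and follows essentially the same route as the paper's: both verify the direct-sum decomposition of $A_s-CC^T$ entrywise, using condition (iv) to kill the off-diagonal blocks and condition (iii) to confine the support of each $\varphi(x)$ to $V^f(\Ho^i)$ so that the diagonal blocks match $A_s^{(i)}-C_iC_i^T$. Your extra remarks (the partition of slim vertices via (i)--(ii) and the explicit final eigenvalue step) only make explicit what the paper leaves implicit.
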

\begin{proof}
Let $x \in V^s(\Ho^i)$, $y\in V^s(\Ho^j)$.
Suppose $i \neq j$. Then by the condition \ref{d4} of decomposition,
the $(x,y)$-entry of $A_s$ coincides with the inner product
$(\varphi(x),\varphi(y))$ which is the $(x,y)$-entry of
$CC^T$. Thus the $(x,y)$-entry of $A_s-CC^T$ is $0$, which is the
same as the corresponding entry of the diagonal join.

Suppose $i=j$. Since $\Ho^i$ is an induced subgraph of $\Ho$,
the submatrix of $A_s$ corresponding to the $V^s(\Ho^i)$ is
exactly $A_s^{(i)}$. By the condition \ref{d3} of decomposition,
$\varphi(x)$ has support contained in $V^f(\Ho^i)$.
Thus, the inner product $(\varphi(x),\varphi(y))$ coincides
with the $(x,y)$-entry of $C_iC_i^T$. Therefore,
the $(x,y)$-entry of $A_s-CC^T$ is the same as the
corresponding entry of $A_s^{(i)}-C_iC_i^T$.
\end{proof}

\subsection*{Acknowledgements}
The authors thank Zoran Stani\'c for valuable comments.

\end{document}